\newenvironment{ack}{\medskip{\it Acknowledgement.}}{}
\let\TeXchi\chi
\newbox\chibox
\chibox \hbox{\raise\dp0 \box 0 }
\def\chi{\copy\chibox}
\newtheorem{proposition}{Proposition}[section]
\newtheorem{theorem}{Theorem}[section]
\newtheorem{lemma}{Lemma}[section]
\newtheorem{corollary}{Corollary}[section]
\newtheorem{remark}{Remark}[section]
\numberwithin{equation}{section}
\numberwithin{theorem}{section}
\numberwithin{proposition}{section}
\numberwithin{lemma}{section}
\numberwithin{remark}{section}
\newcommand{\noi}{\noindent}
\newcommand{\dsty}{\displaystyle}
\newcommand{\txty}{\textstyle}
\newcommand{\al}{\alpha}
\newcommand{\gm}{\gamma}
\newcommand{\dl}{\delta}
\newcommand{\lm}{\lambda}
\newcommand{\Lm}{\Lambda}
\newcommand{\vp}{\varphi}
\newcommand{\sig}{\sigma}
\newcommand{\om}{\omega}
\newcommand{\Om}{\Omega}
\newcommand{\z}{\zeta}
\newcommand{\df}[1]{\buildrel\mbox{\small def}\over{#1}}
\newcommand{\rr}{\mathbb{R}}
\newcommand{\rn}{\rr^N}
\newcommand{\bl}[1]{\mathbf{#1}}
\newcommand{\dvg}{\operatorname{div}}
\newcommand{\osc}{\operatornamewithlimits{osc}}
\newcommand{\loc}{\operatorname{loc}}
\newcommand{\pl}{\partial}
\newcommand{\intl}{\int\limits}
\def\Xint#1{\mathchoice
    {\XXint\displaystyle\textstyle{#1}}%
    {\XXint\textstyle\scriptstyle{#1}}%
    {\XXint\scriptstyle\scriptscriptstyle{#1}}%
    {\XXint\scriptscriptstyle\scriptscriptstyle{#1}}%
    \!\int}
\def\XXint#1#2#3{\setbox0=\hbox{$#1{#2#3}{\int}$}
    \vcenter{\hbox{$#2#3$}}\kern-0.5\wd0}
\def\bint{\Xint-}
\def\dashint{\Xint{\raise4pt\hbox to7pt{\hrulefill}}}
\def\dashiint{\bint\kern-0.15cm\bint}
\newcommand{\ovl}[3]{\int_{#1}^{#2}\kern-#3pt\raise4pt\hbox to7pt{\hrulefill}\ }
\newcommand{\ovll}[3]{\intl_{#1}^{#2}\kern-#3pt\raise4pt\hbox to7pt{\hrulefill}\ }
\newcommand{\tvl}[2]{\iint_{#1}\kern-#2pt\raise4pt\hbox to7pt{\hrulefill}\ }
\newcommand{\bye}{\end{document}}
\begin{document}
%%%%%%%%%%%%%%%%%%%%%%%%%%%%%%%%%%%%%%%%%%%%%%%%%%%
\title{A Boundary Estimate for Singular Parabolic Diffusion Equations}
%%%%%%%%%%%%%%%%%%%%%%%%%%%%%%%%%%%%%%%%%%%%%%%%%%%
\author
{Ugo Gianazza\\
%%%%%%%%%%%%%%%%%%%%%%%%%%%%%%%%%%%%%%%%%%%%%%%%
Dipartimento di Matematica ``F. Casorati", 
Universit\`a di Pavia\\ 
via Ferrata 1, 27100 Pavia, Italy\\
email: {\tt gianazza@imati.cnr.it}
%%%%%%%%%%%%%%%%%%%%%%%%%%%%%%%%%%%%%%%%%%%%%%%%%%%
\and
Naian Liao\thanks{Corresponding author}\\
%%%%%%%%%%%%%%%%%%%%%%%%%%%%%%%%%%%%%
College of Mathematics and Statistics\\
Chongqing University\\
Chongqing, China, 401331\\
email: {\tt liaon@cqu.edu.cn}
\and
Teemu Lukkari\\
%%%%%%%%%%%%%%%%%%%%%%%%%%%%%%%%%%%%%%%%%%%%%%%%%%
Department of Mathematics\\
P.O. Box 11100, 00076 Aalto University\\
Espoo, Finland\\
email: {\tt teemu.lukkari@aalto.fi}}
%%%%%%%%%%%%%%%%%%%%%%%%%%%%%%%%%%%%%%%%%%%%%%%%%%%
\date{}
\maketitle
\vskip.4truecm
\centerline{\it Dedicated to Emmanuele DiBenedetto
for his 70$^{th}$ birthday}
\vskip.4truecm
%%%%%%%%%%%%%%%%%%%%%%%%%%%%%%%%%%%%%%%%%%%%%%%%%%%
\begin{abstract}
We prove an estimate on the modulus of continuity at a boundary point of a cylindrical domain for local weak solutions to singular parabolic equations of $p$-laplacian type. The estimate is given in terms of a Wiener-type integral, defined by a proper elliptic $p$-capacity. 
%%%%%%%%%%%%%%%%%%%%%%%%%%%%%%%%%%%%%%%%%%%%%%%%%%%
\vskip.2truecm
%%%%%%%%%%%%%%%%%%%%%%%%%%%%%%%%%%%%%%%%%%%%%%%%%%%
\noindent{\bf AMS Subject Classification (2010):} 
Primary 35K67, 35B65; Secondary 35B45, 35K20
\vskip.2truecm
%%%%%%%%%%%%%%%%%%%%%%%%%%%%%%%%%%%%%%%%%%%%%%%%%%%
\noindent{\bf Key Words}: Parabolic $p$-laplacian, boundary estimates, continuity, elliptic $p$-capacity, Wiener-type integral.
%%%%%%%%%%%%%%%%%%%%%%%%%%%%%%%%%%%%%%%%%%%%%%%%%%%
\end{abstract}
%%%%%%%%%%%%%%%%%%%%%%%%%%%%%%%%%%%%%%%%%%%%%%%%%%%%%%%%%%%%%%%%%%%%%%%
%%%%%%%%%%%%%%%%%%%%%%%%%%%%%%%%%%%%%%%%%%%%%%%%%%%%%%%%%%%%%%%%%%%%%%%
\section{Introduction}\label{S:intro}
%%%%%%%%%%%%%%%%%%%%%%%%%%%%%%%%%%%%%%%%%%%%%%%%%%%%%%%%%%%%%%%%%%%%%
Let $E$ be an open set in $\rn$ and for $T>0$ let $E_T$ denote the
cylindrical domain $E\times(0,T]$. Moreover let
\begin{equation*}
S_T=\partial E\times[0,T],\qquad \partial_p E_T=S_T\cup(E\times\{0\})
\end{equation*}
denote the lateral, and the parabolic boundary respectively.

We shall consider quasi-linear, parabolic partial differential equations of the form
\begin{equation}  \label{Eq:1:1}
u_t-\dvg\bl{A}(x,t,u, Du) = 0\quad \text{ weakly in }\> E_T
\end{equation}
where the function $\bl{A}:E_T\times\rr^{N+1}\to\rn$ is only assumed to be
measurable and subject to the structure conditions
\begin{equation}  \label{Eq:1:2}
\left\{
\begin{array}{l}
\bl{A}(x,t,u,\xi)\cdot \xi\ge C_o|\xi|^p \\
|\bl{A}(x,t,u,\xi)|\le C_1|\xi|^{p-1}%
\end{array}%
\right .\quad \text{ a.e.}\> (x,t)\in E_T,\, \forall\,u\in\rr,\,\forall\xi\in\rn
\end{equation}
where $C_o$ and $C_1$ are given positive constants, and $1<p<2$.
%\begin{equation}\label{Eq:1:3}
%1<p\le p_*\,\df{=}\,\frac{2N}{N+1}.
%\end{equation}

The 
principal part $\bl{A}$ is required to be monotone in the variable $\xi$ 
in the sense
\begin{equation}\label{Eq:5:1:3}
(\bl{A}(x,t,u,\xi_1)-\bl{A}(x,t,u,\xi_2))
\cdot(\xi_1-\xi_2)\ge0
\end{equation}
for all variables in the indicated domains and Lipschitz continuous 
in the variable $u$, that is,
\begin{equation}\label{Eq:5:1:4}
|\bl{A}(x,t,u_1,\xi)-\bl{A}(x,t,u_2,\xi)|\le \Lm|u_1-u_2| (1+|\xi|^{p-1})
\end{equation}
for some given $\Lm>0$, and for the variables in the indicated domains.

We refer to the parameters $\datap$ as our structural data, and we write $\gm%
=\gm(p,N,C_o,C_1)$ if $\gm$ can be quantitatively
determined a priori only in terms of the above quantities.
A function
\begin{equation}  \label{Eq:1:4}
u\in C\big(0,T;L^2_{\loc}(E)\big)\cap L^p_{\loc}\big(0,T; W^{1,p}_{%
\loc}(E)\big)
\end{equation}
is a local, weak sub(super)-solution to \eqref{Eq:1:1}--\eqref{Eq:1:2} if
for every compact set $K\subset E$ and every sub-interval $[t_1,t_2]\subset
(0,T]$
\begin{equation}  \label{Eq:1:5}
\int_K u\vp dx\bigg|_{t_1}^{t_2}+\int_{t_1}^{t_2}\int_K \big[-u\vp_t+\bl{A}%
(x,t,u,Du)\cdot D\vp\big]dxdt\le(\ge)0
\end{equation}
for all non-negative test functions
\begin{equation*}
\vp\in W^{1,2}_{\loc}\big(0,T;L^2(K)\big)\cap L^p_{\loc}\big(0,T;W_o^{1,p}(K)%
\big).
\end{equation*}
This guarantees that all the integrals in \eqref{Eq:1:5} are convergent.

For any $k\in\rr$, let
\[
(v-k)_-=\max\{-(v-k),0\},\qquad(v-k)_+=\max\{v-k,0\}.
\]
We require \eqref{Eq:1:1}--\eqref{Eq:1:2} to be parabolic, namely that whenever $u$ is a weak solution, for all $k\in\rr$, the functions $(u-k)_\pm$ are weak sub-solutions, with $\bl{A}(x,t,u,Du)$ replaced by $\pm\bl{A}(x,t,k\pm(u-k)_\pm,\pm D(u-k)_\pm)$. As discussed in condition ({\bf A}$_6$) of \cite[Chapter II]{dibe-sv} or Lemma~1.1 of \cite[Chapter 3]{DBGV-mono}, such a condition is satisfied, if for all {$(x,t,u)\in E_T\times\rr$} we have
\[
\bl{A}(x,t,u,\eta)\cdot\eta\ge0\qquad\forall\,\eta\in\rn,
\]
which we assume from here on.

%For points in $\rn$ we use the notation $\dsty(x^{\prime},x_{\scriptscriptstyle N})$ or
%$\dsty(x_1,\dots,x_{\scriptscriptstyle N-1},x_{\scriptscriptstyle N})$; $D'w$
%stands for the gradient of $w$ with respect to $x^{\prime}$.

For $y\in \rn$ and $\rho > 0 $, $K_{\rho}(y)$ denotes the cube of edge $2\rho$, centered at $y$
with faces parallel to the coordinate planes. When $y$ is the
origin of $\rn$ we simply write $K_{\rho}$.

For $a,\,\theta_1,\,\theta_2 > 0$ and $(y,s)\in E_T$, we will consider 
\[
\text{ forward  cylinders: }\ \  K_{a\rho} (y) \times (s,s+\theta_2 \rho^p],
\]
\[
\text { backward cylinders: }\ \  K_{a\rho} (y) \times (s-\theta_1 \rho^p,s],
\]
\[
\text{ centered cylinders: }\ \  K_{a\rho} (y) \times (s-\theta_1 \rho^p,s+\theta_2\rho^p].
\]
We are interested in the boundary behaviour of solutions to the Cauchy-Dirichlet problem
\begin{equation}\label{Eq:1:6}
\left\{
\begin{aligned}
&u_t-\dvg\bl{A}(x,t,u, Du) = 0\quad \text{ weakly in }\> E_T\\
&u(\cdot,t)\Big|_{\partial E}=g(\cdot,t)\quad \text{ a.e. }\ t\in(0,T]\\
&u(\cdot,0)=g(x,0),
\end{aligned}
\right.
\end{equation}
where 
\begin{itemize}
\item {\bf (H1)}: $\bl{A}$ satisfies \eqref{Eq:1:1}--\eqref{Eq:5:1:4} for $\frac{2N}{N+1}<p<2$, 
\item {\bf (H2)}: $\dsty g\in L^p(0,T;W^{1,p}( E))$, and $g$ is
  continuous on $\overline{E}_T$ with modulus of continuity
  $\om_g(\cdot)$.
% \item {\bf (H3)}: $u_o$ is continuous in $\overline E$ with modulus of continuity $\om_o(\cdot)$. 
\end{itemize}
We do not impose any {\it a priori} requirements on the boundary of the domain
$E\subset\rn$.

%In the following, we will only be concerned with continuity on the lateral boundary, and we will not deal with the behavior up to the initial time $t=0$.

A weak sub(super)-solution to
the Cauchy-Dirichlet problem \eqref{Eq:1:6} is a measurable function $u\in C\big(0,T;L^2(E)\big)\cap 
L^p\big(0,T; W^{1,p}(E)\big)$ satisfying
\begin{equation} \label{Eq:1:7}
\begin{aligned}
&\int_E u\vp(x,t) dx+\iint_{E_T} \big[-u\vp_t+\bl{A}%
(x,t,u,Du)\cdot D\vp\big]dxdt\\
&\le(\ge)\int_E g\vp(x,0) dx
\end{aligned}
\end{equation}
for all non-negative test functions
\begin{equation*}
\vp\in W^{1,2}\big(0,T;L^2(E)\big)\cap L^p\big(0,T;W_o^{1,p}(E)\big).
\end{equation*}
In addition, we take the boundary condition $u\le g$ ($u\ge g$) to
mean that $(u-g)_+(\cdot,t)\in W^{1,p}_o(E)$ ($(u-g)_-(\cdot,t)\in W^{1,p}_o(E)$) for
a.e. $t\in(0,T]$. A function $u$ which is both a weak sub-solution and
a weak super-solution is a solution. Notice that the range we are
assuming for $p$, and the continuity of $g$ on the closure of $E_T$
ensure that a weak solution $u$ to \eqref{Eq:1:6} is bounded.

In the sequel we will need {the following comparison principle for weak (sub/super)-solutions
(see \cite[Lemma~3.1]{KiLi96} and
  \cite[Lemma~3.5]{KoKuPa10})}.
The lower semicontinuity of weak super-solutions is discussed in \cite{kuusi2009}.
%%%%%%%%%%%%
\begin{lemma}[Weak Comparison Principle]\label{lem-comp}
Suppose that $u$ is a weak super-solution and $v$ is 
a weak sub-solution to \eqref{Eq:1:6}
in $E_T$. If $u$ and $-v$ are lower semicontinuous on $\overline{E_T}$ and $v\le u$ on the parabolic boundary $\partial_p E_T$, then $v\le u$ a.e.\ in $E_T$.
\end{lemma}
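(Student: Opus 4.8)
The plan is to reduce the comparison principle to a test-function argument applied to the difference $w=v-u$, which is a weak subsolution of one equation and supersolution of another — but because $\bl{A}$ is nonlinear and only monotone, one cannot subtract the two equations directly. Instead I would exploit parabolicity (the fact that $(v-k)_+$ and $(u-k)_-$ are subsolutions of the associated truncated equations) together with the monotonicity \eqref{Eq:5:1:3} and the Lipschitz dependence \eqref{Eq:5:1:4}.

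First I would set $w=(v-u)_+$ and observe that, by hypothesis, $w(\cdot,t)\in W_o^{1,p}(E)$ for a.e.\ $t$ and $w(\cdot,0)=0$, so $w$ is an admissible test function (after a Steklov-averaging regularization in time, to make $w_t$ meaningful; this is the standard device since $u,v$ need not be differentiable in $t$). Using the weak formulations \eqref{Eq:1:5} (or \eqref{Eq:1:7}) for $v$ as a subsolution and $u$ as a supersolution, and testing both with the Steklov average of $w\,\mathbf{1}_{\{t\le\tau\}}$, subtracting yields, after passing to the limit in the regularization,
\begin{equation*}
\tfrac12\int_E w^2(x,\tau)\,dx+\iint_{E\times(0,\tau)\cap\{v>u\}}\big(\bl{A}(x,t,v,Dv)-\bl{A}(x,t,u,Du)\big)\cdot(Dv-Du)\,dx\,dt\le 0.
\end{equation*}
The term $w^2(x,0)$ drops because $v\le u=g$ at $t=0$.

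The key step is to control the spatial integrand from below. On the set $\{v>u\}$ one writes
\begin{equation*}
\big(\bl{A}(x,t,v,Dv)-\bl{A}(x,t,u,Du)\big)\cdot(Dv-Du)=\underbrace{\big(\bl{A}(x,t,v,Dv)-\bl{A}(x,t,v,Du)\big)\cdot(Dv-Du)}_{\ge 0\text{ by }\eqref{Eq:5:1:3}}+\big(\bl{A}(x,t,v,Du)-\bl{A}(x,t,u,Du)\big)\cdot(Dv-Du),
\end{equation*}
and estimates the second term using \eqref{Eq:5:1:4} by $\Lm\,w\,(1+|Du|^{p-1})\,|Dw|$, which after Young's inequality can be absorbed partly into a coercive term (one needs a genuine gradient term on the left; if monotonicity alone does not provide it, one uses that $(v-u)_+$ is a subsolution of a $p$-Laplacian–type equation, via parabolicity, giving an energy estimate of the form $\iint |Dw|^p\lesssim \dots$). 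This leads to a Grönwall-type inequality
\begin{equation*}
\int_E w^2(x,\tau)\,dx\le C\int_0^\tau\int_E w^2(x,t)\,dx\,dt,
\end{equation*}
whence $w\equiv 0$ by Grönwall's lemma, i.e.\ $v\le u$ a.e.

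The main obstacle I anticipate is the time-regularization step combined with the low integrability of $u,v$ (merely $C(0,T;L^2)$): justifying the integration by parts in $t$ and the passage to the limit requires the Steklov-average machinery adapted to the singular range $p<2$, and one must be careful that all terms converge — in particular that the "energy" term $\iint w_t\, w$ produces exactly $\frac12\int w^2$ with the right sign at the endpoints. The nonlinearity in the $u$-slot of $\bl{A}$, handled via \eqref{Eq:5:1:4} and Young's inequality, is the second delicate point, since one must ensure the absorbed gradient term is legitimately available on the left-hand side; this is precisely where parabolicity of the equation (truncations are subsolutions) is used. Since this is a known result, I would in fact just cite \cite[Lemma~3.1]{KiLi96} and \cite[Lemma~3.5]{KoKuPa10}, where these technicalities are carried out in detail, and indicate that the lower semicontinuity assumptions guarantee the boundary inequality $v\le u$ on $\partial_p E_T$ is meaningful pointwise.
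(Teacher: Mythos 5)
The paper does not actually prove Lemma~\ref{lem-comp}: it is stated and used as a black box, with the proof deferred to \cite[Lemma~3.1]{KiLi96} and \cite[Lemma~3.5]{KoKuPa10}, exactly the references you name at the end. So your ultimate resolution (cite) matches the paper. The sketch you give before that, however, has a genuine gap in the absorption step, and it is worth being precise about it.

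After testing with $(v-u)_+$ and decomposing, the monotone piece $(\bl{A}(x,t,v,Dv)-\bl{A}(x,t,v,Du))\cdot D(v-u)$ is only $\ge 0$ under \eqref{Eq:5:1:3}; it is \emph{not} bounded below by $c|D(v-u)|^p$. So after Young's inequality the term $\epsilon\iint|Dw|^p$ has nothing on the left-hand side to absorb it into, and a Gr\"onwall inequality in $\int w^2$ alone does not close. Your proposed repair --- invoke parabolicity to conclude $(v-u)_+$ is a subsolution and thereby obtain $\iint|Dw|^p\lesssim\dots$ --- does not work: the parabolicity hypothesis in the paper concerns truncations $(u-k)_\pm$ of a \emph{single} solution by \emph{constants} $k$; it says nothing about the positive part of the difference of two distinct sub/super-solutions, and asserting that $(v-u)_+$ is a subsolution is essentially equivalent to the comparison principle one is trying to prove, so the argument would be circular. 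In the references you cite, the operator has no explicit $u$-dependence (only $\mathcal{A}(x,t,\xi)$), so the troublesome cross term is absent and the weak monotonicity suffices; handling the $u$-slot via \eqref{Eq:5:1:4} in the singular range requires a more delicate device (e.g.\ truncated or regularized sign-type test functions, or an exponential time weight) than a straightforward Young absorption, and you should not present the latter as routine.
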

%%%%%%%%%%%%
Although the definition has been given in a global way, all the
following arguments and results will have a local thrust: indeed, what we
are interested in, is whether solutions $u$ to
\eqref{Eq:1:1}--\eqref{Eq:1:2} continuously assume the given boundary
data in a single point or some other distinguished part of the lateral
boundary $S_T$ of a cylinder, but not necessarily on the whole $S_T$.
In this context the initial datum will play no role.

Let $\pto\in S_T$, and for $R_o>0$ set
\[
Q_{R_o}=K_{R_o}(x_o)\times(t_o-2R_o^p,t_o+R_o^p],
\]
where $R_o$ is so small that $(t_o-2R_o^p,t_o+R_o^p]\subset(0,T]$.
%, and $E\cap K_{R_o}(x_o)\subset E$.
Moreover, set
\[
\mu_o^+=\sup_{Q_{R_o}\cap E_T}u,\qquad\mu_o^-=\inf_{Q_{R_o}\cap E_T}u,\qquad\om_o=\mu_o^+-\mu_o^-=\osc_{Q_{R_o}\cap E_T}u. 
\]
Here and in the following $c\in(0,1)$ is the same constant as in \eqref{Eq:3:7:2} below. We can then state the main result of this paper.
%, and
%\[
%\om_1=\osc_{\tilde Q_o\cap E_T} u\le\om_o.
%\]

\begin{theorem}\label{Thm:1:1}
Let $u$ be a weak solution to \eqref{Eq:1:6}, and assume that {\bf (H1)} and {\bf (H2)} hold true. Then there exist  two positive constants $\gm$ and $\al$, that depend only on the data $\datap$, such that
\begin{equation}\label{Eq:1:9}
\osc_{Q_{\rho}(\om_o)\cap E_T}u\le
\om_o\exp\left\{-{\gm}\int_{\rho^\al}^1 \left[\dl(s)\right]^{\frac1{p-1}}\frac{ds}{s}\right\}
+2\osc_{\tilde{Q}_o(\rho)\cap S_T}g,
\end{equation}
%
%
%\osc_{Q_{\frac{R_o}{2^{m+1}}}\cap E_T}u\le\osc_{\tilde Q_{o}\cap S_T}g
%+ \gm\left(\osc_{\tilde Q_{o}\cap E_T}u\right)\exp\left(-\gm'\int_{\frac{R_o}{2^{m+1}}}^{\frac{R_o}2}\left[\dl(\rho)\right]^{\frac1{p-1}}\frac{d\rho}\rho\right),
%
where $0<\rho<R_o$ and 
\[
Q_\rho(\om_o)=K_\rho(x_o)\times[t_o-\frac c4\om_o^{2-p}2\rho^p, t_o+\frac c4\om_o^{2-p}\rho^p],
\]
\[
\dl(\rho)=\frac{{\rm cap}_p(K_{\rho}(x_o)\backslash E,K_{\frac32\rho}(x_o))}{{\rm cap}_p(K_{\rho}(x_o),K_{\frac32\rho}(x_o))},
\]
$\tilde{Q}_o(\rho)$ is a proper reference cylinder (see \eqref{large:cyl} below), and ${\rm cap}_p(D,B)$ denotes the (elliptic) $p$-capacity of $D$ with respect to $B$.
\end{theorem}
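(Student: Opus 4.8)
The plan is to establish the oscillation decay estimate \eqref{Eq:1:9} via a De Giorgi–type iteration on a sequence of nested cylinders whose time-scale is intrinsically rescaled by $\om_o^{2-p}$, combined with a comparison argument that transfers information from the boundary of $E$ into the interior through the elliptic $p$-capacity. The starting point is to fix the reference cylinder $Q_{R_o}$, and work inductively: given that on some cylinder $Q_n$ we control the oscillation of $u$ by $\om_n$, we want to produce a smaller cylinder $Q_{n+1}$ (with radius $\rho_{n+1}=\sigma\rho_n$ for a fixed $\sigma\in(0,1)$ depending on the data) on which the oscillation is reduced by a factor depending on $\dl(\rho_n)$. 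The intrinsic scaling $t\mapsto \om_n^{2-p}\rho_n^p$ is forced because we are in the singular range $1<p<2$; it is exactly this scaling that appears in the definition of $Q_\rho(\om_o)$ in the statement.

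**The core of the argument** is an alternative. Either $(u-\mu_n^-)$ or $(\mu_n^+-u)$ — say the former — is "large on a definite portion of the cylinder near the lateral boundary"; since one of $(u-\mu_n^-)_+\ge \tfrac12\om_n$ or $(\mu_n^+-u)_+\ge\tfrac12\om_n$ must hold on a set of positive measure, we may assume $u$ stays away from $\mu_n^-$. Now comes the decisive step: because $(u-g)_+(\cdot,t)\in W_o^{1,p}(E)$, the function $v=(u-k)_-$ with an appropriate level $k$ between $\mu_n^-$ and $\mu_n^+$, extended by zero outside $E$, belongs to $W_o^{1,p}(K_{\frac32\rho_n})$ up to the error controlled by $\osc g$, and it is \emph{nonzero} on $K_{\rho_n}\setminus E$. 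Hence, by the definition of relative $p$-capacity, a Poincaré-type / capacitary inequality gives a lower bound: the $L^p$-norm of $Dv$ on $K_{\frac32\rho_n}$ is bounded below by $c\,\dl(\rho_n)\,(\text{level})^p\,\rho_n^{N}$, roughly speaking. This forces the energy to be large, which — fed back into the energy (Caccioppoli) estimate for $(u-k)_-$ — forces $(u-k)_-$ to be \emph{small on a smaller cylinder}, by a De Giorgi lemma. Quantitatively this yields $\om_{n+1}\le \big(1-c\,[\dl(\rho_n)]^{\frac1{p-1}}\big)\om_n + (\text{contribution from }\osc_{S_T}g)$, the exponent $\frac{1}{p-1}$ arising from the $(p-1)$-homogeneity of $\bl A$ when one converts an energy lower bound into a pointwise measure-of-positivity estimate.

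**Iterating this recursion** and summing the logarithms $\log\big(1-c[\dl(\rho_n)]^{1/(p-1)}\big)\le -c[\dl(\rho_n)]^{1/(p-1)}$ over $n$, and comparing the resulting discrete sum with the integral $\int_{\rho^\al}^1 [\dl(s)]^{1/(p-1)}\,\tfrac{ds}{s}$ (using that $\rho_n=\sigma^n R_o$ makes $\tfrac{ds}{s}$ the natural summation measure, and monotonicity of $\dl$ in the relevant range up to harmless distortion), produces the exponential factor $\exp\{-\gm\int_{\rho^\al}^1[\dl(s)]^{1/(p-1)}\,\tfrac{ds}{s}\}$ multiplying $\om_o$. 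The boundary-data error terms, each bounded by a multiple of $\osc_{\tilde Q_o(\rho)\cap S_T}g$, are summed using the modulus of continuity $\om_g$ of $g$ and absorbed into the additive term $2\osc_{\tilde Q_o(\rho)\cap S_T}g$; the exponent $\al$ is chosen to reconcile the geometric shrinking of the spatial radii with the intrinsic parabolic rescaling of the time variable (so that the final cylinder has the form $Q_\rho(\om_o)$).

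**The main obstacle** I expect is the intrinsic scaling bookkeeping in the singular regime. Unlike the degenerate case $p>2$, here the time-scale factor $\om_n^{2-p}$ \emph{grows} as $\om_n$ shrinks, so the successive intrinsic cylinders are not automatically nested; one must either carry out the construction so that $Q_{n+1}(\om_{n+1})\subset Q_n(\om_n)$ is genuinely maintained — which constrains how fast $\om_n$ may decay per step and hence the size of the constant $\gm$ — or run the De Giorgi iteration on a fixed (non-intrinsic) cylinder and only at the end rescale, paying a price that must be shown not to destroy the capacitary gain. Making the capacitary lower bound survive this rescaling, uniformly in $n$ and independently of any regularity of $\partial E$, and checking that the accumulated $\osc g$ terms do not dominate, is where the real work lies; the De Giorgi lemmas and Caccioppoli estimates themselves are by now standard for \eqref{Eq:1:1}–\eqref{Eq:1:2} and may be quoted from \cite{DBGV-mono}.
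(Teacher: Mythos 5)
Your high-level architecture (shrink nested cylinders with intrinsic time scaling $\om_j^{2-p}\rho_j^p$, obtain a per-step oscillation decay factor $1-c[\dl(\rho_j)]^{1/(p-1)}$, take logarithms and compare the resulting sum with the Wiener integral, and absorb boundary-data errors into $2\osc g$) matches the paper's. However, the central mechanism you propose does not close, and it is not what the paper does.

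First, the capacitary test function is set up in the wrong direction. You take $v=(u-k)_-$ extended by zero outside $E$ and then claim $v$ is nonzero on $K_{\rho}\setminus E$; the zero extension is, by construction, zero there, so $\z v$ cannot be an admissible competitor for ${\rm cap}_p(K_\rho\setminus E,K_{3\rho/2})$. The paper instead (Lemma~\ref{Lem:2:0} and the setup \eqref{Eq:super}) takes $u_k=(u-k)_+$ with $k\ge\sup_\Sigma g$, extends it by zero (consistent since $u\le g\le k$ near $\partial E$), sets $\mu=\sup u_k$ and $v=\mu-u_k$; now $v\equiv\mu$ on $Q\setminus E_T$, so after normalizing, $\z v/\mu$ equals $1$ on $K_\rho\setminus E$ and is admissible for the (parabolic, hence via Proposition~\ref{Prop:4:2} the elliptic) capacity. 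Feeding this into the Caccioppoli-type estimate produces precisely
\[
\mu^{p}\,\theta\rho^p\,{\rm cap}_p(K_\rho\setminus E,K_{3\rho/2})\le\gm\,\theta\mu\rho^N\Big[\dashint_{K_{2\rho}}v(x,t_o)\,dx\Big]^{p-1},
\]
i.e.\ \eqref{Eq:low-bd1}. Second, and more importantly, you propose converting this into a pointwise conclusion via ``a De Giorgi lemma / measure-of-positivity estimate''; but the capacitary step gives only a lower bound on the $L^1$-\emph{average} of $v$ at a time slice, not a measure estimate, and a De Giorgi shrinking lemma cannot digest that as such. The paper's decisive tool is the \emph{weak Harnack inequality} (Proposition~\ref{Prop:3:1}), which promotes $\dashint_{K_{2\rho}}v(\cdot,t_o)$ to $\inf_{K_{2\rho}}v(\cdot,\tau)$ at later (intrinsically scaled) times, yielding \eqref{Eq:low-bd2} and hence $\mu_{j+1}\le(1-\frac1{\gm_2}[\dl(r_j)]^{1/(p-1)})\mu_j$. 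This weak Harnack inequality is not a quotable standard lemma in the general singular range: the paper proves it here via the comparison principle, using assumptions \eqref{Eq:5:1:3}--\eqref{Eq:5:1:4}, and explicitly remarks that removing this dependency is an open problem. Your outline glosses over this as routine, which is exactly where the proof would actually fail. Finally, the boundary $L^1$ Harnack inequality (Proposition~\ref{Prop:A:1:1} and Lemma~\ref{Lm:A:1:2}) is needed to control the $\iint|Dv|^{p-1}$ term and to make the intrinsic time scale at each step comparable to $\dashint v$; this is another ingredient absent from your sketch that is essential to making the nested cylinders and the choice of $\theta$ consistent.
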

A point $(x_o,t_o)\in S_T$ is called a {\it Wiener point} if
$\dsty\int_\tau^1\left[\dl(\rho)\right]^{\frac1{p-1}}\frac{d\rho}\rho\to\infty$
as $\tau\to0$. Therefore, {from Theorem~\ref{Thm:1:1} we can conclude
  the following corollary in a standard way.}
\begin{corollary}
Let $u$ be a weak solution to \eqref{Eq:1:6}, assume that {\bf (H1)} and {\bf (H2)} hold true, and that $(x_o,t_o)\in S_T$ is a Wiener point. Then 
\[
\lim_{\genfrac{}{}{0pt}{}{(x,t)\to(x_o,t_o)}{(x,t)\in E_T}}u(x,t)=g(x_o,t_o).
\]
\end{corollary}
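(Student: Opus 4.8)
Fix a Wiener point $(x_o,t_o)\in S_T$ and keep $R_o$, $Q_{R_o}$, $\mu_o^\pm$, $\om_o$ as in Theorem~\ref{Thm:1:1}. If $\om_o=0$, then $u$ equals a constant $c$ on $Q_{R_o}\cap E_T$; since $(u-g)(\cdot,t)\in W^{1,p}_o(E)$ for a.e.\ $t$, the trace of $c$ agrees with $g$ on $S_T\cap Q_{R_o}$, and continuity of $g$ forces $c=g(x_o,t_o)$, which already gives the claim. Assume henceforth $\om_o>0$. The plan has three steps: (a) let $\rho\to0$ in \eqref{Eq:1:9} and show the right‑hand side vanishes; (b) deduce that $m:=\lim_{E_T\ni(x,t)\to(x_o,t_o)}u(x,t)$ exists; (c) identify $m$ with $g(x_o,t_o)$, using the boundary condition together with the $p$‑thickness of $\rn\setminus E$ at a Wiener point.

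\emph{Steps (a)--(b).} Since $\al>0$ we have $\rho^\al\downarrow0$, so by the very definition of a Wiener point $\int_{\rho^\al}^1[\dl(s)]^{\frac1{p-1}}\frac{ds}{s}\to\infty$; as $u$ is bounded ($\om_o<\infty$), the first term on the right of \eqref{Eq:1:9} tends to $0$. Since the reference cylinder $\tilde Q_o(\rho)$ contracts to $(x_o,t_o)$ as $\rho\to0$, hypothesis \textbf{(H2)} gives $2\osc_{\tilde Q_o(\rho)\cap S_T}g\to0$ as well. Hence $\osc_{Q_\rho(\om_o)\cap E_T}u\to0$. Because $\om_o>0$, for each fixed small $\rho$ the set $Q_\rho(\om_o)=K_\rho(x_o)\times[t_o-\tfrac c4\om_o^{2-p}2\rho^p,\,t_o+\tfrac c4\om_o^{2-p}\rho^p]$ contains a full neighbourhood of $(x_o,t_o)$ in $\rn\times\rr$ ($t_o$ lies in the interior of the time interval), and these cylinders shrink to $(x_o,t_o)$. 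Therefore the difference between $\limsup$ and $\liminf$ of $u$ along $E_T\ni(x,t)\to(x_o,t_o)$ is bounded by $\osc_{Q_\rho(\om_o)\cap E_T}u$ for all small $\rho$, hence is $0$: the finite limit $m$ exists.

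\emph{Step (c).} Set $A:=m-g(x_o,t_o)$ and suppose $A\neq0$. At a Wiener point, $\dl(\rho)>0$ for $\rho$ along some sequence $\rho_k\downarrow0$, so ${\rm cap}_p\big(K_{\rho_k}(x_o)\setminus E\big)>0$ for each $k$; fix $k$ so large that $\rho_k<R_o$, the time interval $I_k$ of $Q_{\rho_k}(\om_o)$ lies in $(0,T)$, and, using continuity of $g$, both $\varepsilon_k:=\osc_{Q_{\rho_k}(\om_o)\cap E_T}u$ and the oscillation of $g$ over $Q_{\rho_k}(\om_o)$ are $<\tfrac14|A|$. For a.e.\ $t\in I_k$, $v:=(u-g)(\cdot,t)\in W^{1,p}_o(E)$; its extension $\bar v$ by zero lies in $W^{1,p}(\rn)$ and its quasicontinuous representative vanishes ${\rm cap}_p$‑a.e.\ on $\rn\setminus E$, while on $E\cap K_{\rho_k}(x_o)$ it is continuous ($u$ is continuous in $E_T$) and satisfies $|\bar v-A|<\tfrac12|A|$ (because $|u-m|\le\varepsilon_k$ there). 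Consequently the truncation $\psi:=\big(1-\tfrac{2\,\mathrm{sgn}(A)}{|A|}\,\bar v\big)_+$ belongs to $W^{1,p}(K_{\rho_k}(x_o))$ and equals $\mathbf 1_{K_{\rho_k}(x_o)\setminus E}$ ${\rm cap}_p$‑a.e.\ on $K_{\rho_k}(x_o)$ ($\psi=1$ where $\bar v=0$ and $\psi=0$ where $\bar v$ is within $\tfrac12|A|$ of $A$). A Sobolev function on the connected cube $K_{\rho_k}(x_o)$ taking only the values $0$ and $1$ is a.e.\ constant; since $E$ is open and $x_o\in\partial E$, one has $|E\cap K_{\rho_k}(x_o)|>0$, forcing this constant to be $0$, so ${\rm cap}_p(K_{\rho_k}(x_o)\setminus E)=0$ — a contradiction. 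Hence $A=0$, i.e.\ $m=g(x_o,t_o)$, which together with step (b) is exactly the assertion of the Corollary.

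The two limits in step (a) are immediate and step (b) is soft; the one place that needs genuine care is step (c), namely the potential‑theoretic statement that a function in $W^{1,p}_o(E)$ which is nearly constant near a point of $\partial E$ at which $\rn\setminus E$ carries positive $p$‑capacity must be nearly zero there. This, together with the routine bookkeeping of admissible time slices and the trivial case $\om_o=0$, is what I expect to cost a few additional lines; everything else is a direct consequence of Theorem~\ref{Thm:1:1}.
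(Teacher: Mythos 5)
The paper itself offers no written proof of this corollary (it is dismissed as following ``in a standard way''), so there is nothing to match line by line; what matters is whether your argument is sound, and it is. Steps (a)--(b) are exactly the expected use of Theorem~\ref{Thm:1:1}: at a Wiener point the exponential term vanishes because the integral diverges, and $\osc_{\tilde Q_o(\rho)\cap S_T}g\to0$ because $r=[\bar\om(\rho^\al)]^\nu\to0$, so the reference cylinder shrinks to $(x_o,t_o)$; your separate treatment of $\om_o=0$ is a good catch, since then $Q_\rho(\om_o)$ degenerates to a time slice and contains no spacetime neighbourhood. Step (c) is where you genuinely diverge from what the authors presumably have in mind. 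You identify the limit $m$ with $g(x_o,t_o)$ by nonlinear potential theory: the zero extension of $(u-g)(\cdot,t)\in W^{1,p}_o(E)$ has a quasicontinuous representative vanishing ${\rm cap}_p$-q.e.\ on $\rn\setminus E$, and the truncation $\psi$ you build would otherwise be an a.e.\ constant $\{0,1\}$-valued Sobolev function forcing ${\rm cap}_p(K_{\rho_k}(x_o)\setminus E)=0$, contradicting $\dl(\rho_k)>0$. This is correct, but it imports two facts not in the paper (quasicontinuity of zero extensions of $W^{1,p}_o$ functions, and ``quasicontinuous $+$ a.e.\ zero $\Rightarrow$ q.e.\ zero''), which you should cite explicitly, e.g.\ from Heinonen--Kilpel\"ainen--Martio or \cite{mal-zie}. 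The more economical route, entirely inside the paper's toolbox, is to fix $\eps>0$, take the constant level $k=g(x_o,t_o)+\eps$, note that $k\ge\sup_{\tilde Q_j\cap S_T}g$ for all large $j$ by continuity of $g$, and run the iteration of Section~\ref{S:final} (i.e.\ Lemma~\ref{Lm:5:1} and \eqref{Eq:6:1}) with this single fixed level to get $\sup_{\tilde Q_j}(u-k)_+\le\sup(u-k)_+\prod[1-\frac1{\gm_2}[\dl(r_i)]^{\frac1{p-1}}]\to0$, hence $\limsup u\le g(x_o,t_o)+\eps$, and symmetrically for the $\liminf$; this yields existence and identification of the limit simultaneously and needs no fine potential theory. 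Either way the corollary holds; your version trades a re-run of the iteration for standard facts about $W^{1,p}_o$ and capacity.
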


Theorem \ref{Thm:1:1} also implies H\"older regularity up to the
boundary under a fairly weak assumption on the domain. More
spesifically, a set $A$ is uniformly $p$-fat, if for some
$\gm_o,\,\rho_o>0$ one has
\[
\frac{{\rm cap}_p(K_{\rho}(x_o)\cap A,K_{\frac32\rho}(x_o))}{{\rm
    cap}_p(K_{\rho}(x_o),K_{\frac32\rho}(x_o))}\geq \gm_o
\]
for all $0<\rho <\rho_o$ and all $x_o\in A$. See \cite{lewis1988} for
more on this notion. We have the following corollary. 

\begin{corollary}\label{cor:holder}
  Let $u$ be a weak solution to \eqref{Eq:1:6}, assume that {\bf (H1)}
  and {\bf (H2)} hold true, that the complement of the domain $E$ is
  uniformly $p$-fat, and let $g$ be H\"older continuous.  Then the
  solution $u$ is H\"older continuous up to the boundary.
\end{corollary}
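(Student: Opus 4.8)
The plan is to deduce Hölder continuity by iterating the oscillation estimate \eqref{Eq:1:9} at dyadic scales and using the uniform $p$-fatness hypothesis to produce a fixed geometric decay rate. First I would fix a boundary point $(x_o,t_o)\in S_T$ and a small radius $\rho<\rho_o$. Since the complement of $E$ is uniformly $p$-fat, one has $\dl(s)\geq\gm_o$ for all $s\in(0,\rho_o)$, so the Wiener integral in \eqref{Eq:1:9} is bounded below: for $0<\rho<R_o$ small,
\[
\int_{\rho^\al}^1\left[\dl(s)\right]^{\frac1{p-1}}\frac{ds}{s}\;\ge\;\gm_o^{\frac1{p-1}}\int_{\rho^\al}^1\frac{ds}{s}\;=\;\gm_o^{\frac1{p-1}}\,\al\,\log\frac1\rho .
\]
Substituting into \eqref{Eq:1:9} gives $\osc_{Q_\rho(\om_o)\cap E_T}u\le\om_o\,\rho^{\gm\al\gm_o^{1/(p-1)}}+2\osc_{\tilde Q_o(\rho)\cap S_T}g$. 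Writing $\beta_1:=\gm\al\gm_o^{1/(p-1)}$ and using that $g$ is Hölder continuous with some exponent $\beta_2$ (so that $\osc_{\tilde Q_o(\rho)\cap S_T}g\le c\,\rho^{\beta_2/?}$, after accounting for the parabolic scaling of $\tilde Q_o(\rho)$ in the time direction — a power of $\rho$ in any case), one obtains a bound of the form $\osc_{Q_\rho(\om_o)\cap E_T}u\le c\,\rho^{\beta}$ for $\beta=\min\{\beta_1,\beta_2'\}>0$ and all sufficiently small $\rho$.

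The remaining point is purely geometric: the cylinder $Q_\rho(\om_o)$ appearing on the left has time-length proportional to $\om_o^{2-p}\rho^p$ rather than $\rho^p$, so the estimate is an oscillation decay with respect to the intrinsic parabolic metric. Because $\om_o$ is bounded (the solution is bounded, as noted after \eqref{Eq:1:7}), we have $\om_o^{2-p}\ge c_\ast>0$ whenever $\om_o$ is bounded away from $0$; and if $\om_o$ is already small there is nothing to prove on that scale. Hence after adjusting constants one can replace $Q_\rho(\om_o)$ by a standard parabolic cylinder $K_{c\rho}(x_o)\times(t_o-c\rho^p,t_o+c\rho^p)$ at the cost of a further fixed factor, and conclude
\[
\osc_{\big(K_{c\rho}(x_o)\times(t_o-c\rho^p,t_o+c\rho^p)\big)\cap E_T}u\;\le\;c\,\rho^{\beta}.
\]
Combining this boundary decay with the known interior Hölder continuity of solutions to singular $p$-Laplacian-type equations (\cite{DBGV-mono}), via a standard argument that interpolates between boundary and interior points according to whether a point is closer to $S_T$ than its distance allows, yields Hölder continuity of $u$ on $\overline{E_T}$ near $(x_o,t_o)$, and since $(x_o,t_o)$ was arbitrary, up to the boundary globally.

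The main obstacle I expect is not the iteration itself but bookkeeping the \emph{intrinsic geometry}: the estimate \eqref{Eq:1:9} is stated on a cylinder whose vertical scale depends on the (a priori unknown, scale-dependent) oscillation $\om_o$, and one must check that the decay survives passage to standard cylinders uniformly in the scale. This requires a routine but careful alternative argument (if $\om_o$ is small, stop; otherwise $\om_o^{2-p}$ is controlled), together with the standard covering/interpolation step to merge the boundary estimate with interior regularity. The capacity comparison and the logarithmic lower bound for the Wiener integral are the easy parts; everything hinges on handling the degenerate-vs-nondegenerate dichotomy in the time scaling cleanly.
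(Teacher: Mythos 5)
The paper states Corollary \ref{cor:holder} without proof, so there is no paper proof to compare against; your plan is the intended standard argument and its main steps are sound. The lower bound on the Wiener integral from uniform $p$-fatness, namely $\int_{\rho^\al}^1[\dl(s)]^{1/(p-1)}\,ds/s\ge\gm_o^{1/(p-1)}\al\log(1/\rho)$, the resulting power decay $\om_o\rho^{\beta_1}$, the observation that $r=[\bar{\om}(\rho^\al)]^{\nu}$ is trapped between two positive powers of $\rho$ (since $\gm_o\le\dl\le1$) so that $\osc_{\tilde Q_o(\rho)\cap S_T}g$ is also a power of $\rho$, and the final interpolation with the interior H\"older theory of \cite{DBGV-mono} are all correct.

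Where the plan goes slightly astray is the ``degenerate-vs-nondegenerate dichotomy'' for the intrinsic time-scaling. You propose ``if $\om_o$ is small, stop; otherwise $\om_o^{2-p}$ is controlled,'' and you also invoke boundedness of $u$ to claim $\om_o^{2-p}\ge c_*>0$. Neither point is right: $\om_o$ is a \emph{fixed} quantity once $(x_o,t_o)$ and $R_o$ are chosen (it does not vary with the scale $\rho$), boundedness gives only an upper bound on $\om_o$ (and since $2-p>0$, small $\om_o$ makes $\om_o^{2-p}$ \emph{small}, not large), and small $\om_o$ alone does not give the required decay in $\rho$. The correct mechanism for converting the intrinsic cylinder $Q_\rho(\om_o)$, whose time-length is $\tfrac34c\,\om_o^{2-p}\rho^p$, into a standard parabolic cylinder of time-length $\sim d^p$ is the prefactor $\om_o$ in \eqref{Eq:1:9}. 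To contain a standard cylinder of radius $d$ one must take $\rho\sim d\,\om_o^{-(2-p)/p}$; this inflates the first term to $\om_o\rho^{\beta}\sim\om_o^{1-\beta(2-p)/p}\,d^{\beta}$, and provided the H\"older exponent is reduced so that $\beta<p/(2-p)$, the prefactor satisfies $\om_o^{1-\beta(2-p)/p}\le1$ (one may assume $\om_o\le1$), giving a bound uniform in $(x_o,t_o)$. Scales $d$ too large for the resulting $\rho$ to stay below $R_o$ are handled by the trivial bound $\osc\le\om_o$ together with the same inequality $\beta<p/(2-p)$. With this correction your outline goes through.
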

\subsection{Novelty and Significance}
For solutions to {\it linear, elliptic} equations with bounded and measurable coefficients, vanishing on a part of the boundary $\partial E$ near the origin, it is well-known that a weak solution $u$ satisfies the following estimate 
\begin{equation}\label{Eq:1:8}
\sup_{E\cap B_r}|u(x)|\le C\sup_{E\cap B_{r_o}}|u(x)|\exp\left(-C'\int_r^{r_o}\frac{\hbox{cap}(E^c\cap B_\rho)}{\rho^{N-2}}\frac{d\rho}\rho\right).
\end{equation}
It was proved by Maz'ya for harmonic functions in \cite{mazya63} and for solutions to more general linear equations in \cite{mazya67}. Such a result implies the sufficiency part of the Wiener criterion. Similar estimates have then been obtained for solutions to equations of $p$-laplacian type in \cite{mazya76,Gar-Zie} (just to mention only few results; for a comprehensive survey of the elliptic theory, see \cite{mal-zie}).

In the parabolic setting, these estimates have been proved for the heat operator in \cite{biroli-mosco85}. Continuity at the boundary for quite general operators with the same growth has been considered in \cite{ziemer, ziemer1982}. {Parabolic quasi-minima are dealt with in \cite{marchi}: in such a case, no explicit estimate of the modulus of continuity is given, but the divergence of a proper Wiener integral yields the continuity of the quasi-minimum.} They have also been extended to the parabolic obstacle problem in \cite{biroli-mosco87}. 

To our knowledge, much less is known for nonlinear diffusion operators, such as the $p$-laplacian.
%and this is precisely the aim of this paper: here we are concerned with a boundary estimate 
%like the one given in \eqref{Eq:1:8} for solutions to the Cauchy-Dirichlet problem \eqref{Eq:1:6}. 
%
A result similar to ours, obtained with different techniques, is stated in \cite{skrypnik}. In such a case, the comparison principle plays a central role. Even though we also assume that the comparison principle is satisfied, nevertheless its role is limited to the proof of the weak Harnack inequality, whereas all the other estimates are totally independent of it. Therefore, if one could prove Proposition~\ref{Prop:3:1} for a general operator, then Theorem~\ref{Thm:1:1} and its corollaries would automatically hold too.

The fact that a Wiener point is a continuity point has already been
observed in \cite{BBGP} for the prototype parabolic $p$-laplacian, for
any $p>1$, hence both singular, i.e. with $1<p<2$, and degenerate,
i.e. with $p>2$ (see also \cite{KiLi96}). Under this point of view,
here the novelty is twofold: we deal with more general operators, and
we provide a quantitative modulus of continuity in terms of the
integral of the relative capacity $\dl(\rho)$. On the other hand, here
we focus just on the singular case, and, due to the use of the weak
Harnack inequality, we have to limit $p$ in the so-called singular
super-critical range $(\frac{2N}{N+1},2)$. The results in \cite{BBGP}
suggest that a statement like Theorem~\ref{Thm:1:1} should hold also
in the singular critical and sub-critical range
$1<p\le\frac{2N}{N+1}$; this is no surprise, since locally bounded
solutions are locally continuous {in} the interior for any $1<p<2$,
and there is no apparent reason, why things should be different, when
working at the boundary. This problem will be investigated in a future
paper, together with the degenerate case $p>2$.

Finally, Corollary~\ref{cor:holder} can be seen as an extension of Theorem~1.2 of \cite[Chapter IV]{dibe-sv}, where the H\"older continuity up to the boundary of weak solutions to the Cauchy-Dirichlet problem \eqref{Eq:1:6} with H\"older continuous boundary data is proved assuming that the domain $E$ satisfies a positive geometric density condition, namely that there exist $\al_*\in(0,1)$ and $\rho_o>0$ such that 
$\forall\,x_o\in\partial E$, for every ball $B_\rho(x_o)$ centered at $x_o$ and radius $\rho\le\rho_o$ 
\[
|E\cap B_\rho(x_o)|\le(1-\al_*)|B_\rho(x_o)|.
\]
It is not hard to see that if a domain $E$ has positive geometric
density, then the complement of $E$ is uniformly $p$-fat, but the
opposite implication obviously does not hold.

% there exist $\gm_o\in(0,1)$ and $\rho_o>0$ such that
% $\forall\,x_o\in\partial E$, and for every radius $\rho\le\rho_o$,
% the relative capacity with respect to $x_o$ satisfies
% $\dsty\dl(\rho)\ge\gm_o$,

The proof of Theorem~\ref{Thm:1:1} is given Section~\ref{S:final}, whereas the previous sections are devoted to introductory material, namely a boundary $L^1$ Harnack inequality (Section~2), the weak Harnack inequality (Section~3), the definition of capacity (Section~4), and a final auxiliary lemma (Section~5).

\begin{ack} 
{\normalfont The authors thank Juha Kinnunen, who suggested this problem, during the program ``Evolutionary problems'' in the Fall 2013 at the Institut Mittag-Leffler, and are very grateful to Emmanuele DiBene\-det\-to, for discussions and comments, which greatly helped to improve the final version of this manuscript.}
\end{ack}
%%%%%%%%%%%%%%%%%%%%%%%%%%%%%%%%%%%%%%%%%%%%%%%%
%%%%%%%%%%%%%%%%%%%%%%%%%%%%%%%%%%%%%%%%%%%%%%%%
\section{A Boundary $L^1$ Harnack Inequality for Super-Solutions in the Whole Range 
$p\in(1,2)$}\label{S:A:1}

Fix $(x_o,t)\in S_T$, consider the cylinder 
\begin{equation}\label{Eq:cyl1}
Q=K_{16\rho}(x_o)\times[s,t],
\end{equation} 
where $s$, $t$ are such that $0<s<t<T$, and let $\Sigma\df=S_T\cap Q$. Our estimates are based on the
following simple lemma.

\begin{lemma}\label{Lem:2:0}
 Take any number $k$ such that
  $k\ge\sup_\Sigma g$. Let $u$ be a weak solution to the problem
  \eqref{Eq:1:6}, and define 
  \begin{displaymath}
    u_k=
    \begin{cases}
      (u-k)_+, &\text{in }Q\cap E_T, \\
      0, & \text{in }Q\setminus E_T.
    \end{cases}
  \end{displaymath}  
  Then $u_k$ is a (local) weak sub-solution in the cylinder $Q$. The
  same conclusion holds for the zero extension of $u_h=(h-u)_+$ for
  truncation levels $h\leq \inf_{\Sigma}g$.
\end{lemma}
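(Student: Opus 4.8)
The plan is to verify directly that the zero extension $u_k$ satisfies the weak sub-solution inequality \eqref{Eq:1:5} on every compact subset $K$ of $K_{16\rho}(x_o)$ and every time interval $[t_1,t_2]\subset[s,t]$, with the modified principal part $\bl{A}(x,\tau,k+(u-k)_+,D(u-k)_+)$ dictated by the parabolicity assumption. The point is that inside $Q\cap E_T$ this is immediate from the fact that $(u-k)_+$ is a weak sub-solution there (this is the parabolicity of the equation, assumed after \eqref{Eq:1:5}), so the whole issue is to control what happens across the lateral boundary $\Sigma=S_T\cap Q$, where the zero extension kicks in. First I would record that since $k\ge\sup_\Sigma g\ge g(\cdot,\tau)$ on $\partial E$ for a.e.\ $\tau$, the boundary condition $(u-g)_+(\cdot,\tau)\in W^{1,p}_o(E)$ forces $(u-k)_+(\cdot,\tau)\in W^{1,p}_o(E)$ as well (truncating at a higher level preserves the zero boundary trace). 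Consequently the zero extension of $(u-k)_+$ across $\partial E$ lies in $W^{1,p}_{\loc}$ on $K_{16\rho}(x_o)$ for a.e.\ $\tau$, with spatial gradient equal to $D(u-k)_+$ on $E$ and $0$ off $E$; integrating in time gives $u_k\in L^p_{\loc}(s,t;W^{1,p}_{\loc}(K_{16\rho}(x_o)))$, and the time-continuity in $L^2_{\loc}$ is inherited similarly. This is the step I expect to be the main (if modest) obstacle: making the gluing of the Sobolev functions across $\partial E$ rigorous, i.e.\ justifying that zero-extension of a $W^{1,p}_o(E)$ function to a larger ball is again weakly differentiable with the obvious gradient, and that no singular boundary term is created — here one uses that $E$ need not be regular, so the argument must rely only on the $W^{1,p}_o$ membership, not on any trace theorem for $\partial E$.

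Next I would test the sub-solution inequality. Fix a non-negative $\vp\in W^{1,2}_{\loc}(s,t;L^2(K))\cap L^p_{\loc}(s,t;W^{1,p}_o(K))$ with $K\Subset K_{16\rho}(x_o)$. Since $\vp$ has compact spatial support in $K_{16\rho}(x_o)$ but not necessarily in $E$, it is not directly an admissible test function for the interior equation on $E_T$. The remedy is to observe that on $Q\setminus E_T$ both $u_k$ and its gradient vanish, so the integrals in \eqref{Eq:1:5} computed over $K$ reduce to integrals over $K\cap E$; and on $K\cap E$ one may use $\vp$ itself as a test function against $(u-k)_+$ because $(u-k)_+$ vanishes (in the $W^{1,p}_o(E)$ sense) near $\partial E\cap K$, so $\vp (u-k)_+$ still has the right boundary behaviour — more precisely, one tests the interior inequality for $(u-k)_+$ with $\vp$ restricted to $E$, which is legitimate since $(u-k)_+(\cdot,\tau)\in W^{1,p}_o(E)$ lets us localize. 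Putting these together yields exactly \eqref{Eq:1:5} for $u_k$ on $K$ with the stated modified $\bl{A}$, which is the definition of $u_k$ being a local weak sub-solution in $Q$.

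Finally, the statement for $u_h=(h-u)_+$ with $h\le\inf_\Sigma g$ is entirely symmetric: parabolicity gives that $(h-u)_+=(u-h)_-$ is a weak sub-solution for the corresponding reflected operator $-\bl{A}(x,\tau,h-(u-h)_-,-D(u-h)_-)$ on $E_T$, the inequality $h\le\inf_\Sigma g\le g(\cdot,\tau)$ together with $(u-g)_-(\cdot,\tau)\in W^{1,p}_o(E)$ gives $(h-u)_+(\cdot,\tau)\in W^{1,p}_o(E)$, and the same gluing and testing argument carries over verbatim. Thus $u_h$ extended by zero is a local weak sub-solution in $Q$, completing the proof.
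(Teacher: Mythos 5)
You have correctly isolated the crux of the proof --- showing that the zero extension of $(u-k)_+$ is weakly differentiable across $\partial E$ without any boundary regularity --- but the step you use to settle it is not valid as stated, and the paper's proof is designed precisely to repair it. Your claim is that since $k\ge\sup_\Sigma g$ and $(u-g)_+(\cdot,\tau)\in W^{1,p}_o(E)$, one gets $(u-k)_+(\cdot,\tau)\in W^{1,p}_o(E)$ because ``truncating at a higher level preserves the zero boundary trace.'' The mechanism you have in mind is the standard fact that if $0\le h\le f$ with $f\in W^{1,p}_o(E)$ and $h\in W^{1,p}(E)$, then $h\in W^{1,p}_o(E)$; to invoke it you would need $(u-k)_+\le(u-g)_+$, i.e.\ $g\le k$, \emph{pointwise throughout $E$}. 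But the hypothesis $k\ge\sup_\Sigma g$ only controls $g$ on $\Sigma=S_T\cap Q$, i.e.\ on the part of the lateral boundary inside the small cylinder $Q$. Away from $Q$, and in the interior of $E$, $g$ may well exceed $k$, so the pointwise domination $(u-k)_+\le(u-g)_+$ fails and the monotonicity argument collapses. There is no trace theorem available on the rough boundary to substitute for it, and this is exactly where a more careful argument is unavoidable.

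The paper handles this by a compensating construction. It reduces to $k=\sup_\Sigma g$, takes an exhaustion $E_j\Subset E_{j+1}\Subset\cdots$ of $E$, and for each $j$ defines the intermediate level $k_j=\sup_{(Q\cap\overline E\times[s,t])\setminus(E_j\times[s,t])}g$, so that $g\le k_j$ holds on the set where comparison with $(u-g)_+$ is needed --- namely near $\partial E$, outside $E_j$ --- but no claim is made in the interior. With a cut-off $\varphi\in C^\infty_o(E)$, $\varphi\equiv1$ on $\overline E_j$, the blend $w=(1-\varphi)(u-g)_++\varphi(u-k_j)_+$ is a genuine $W^{1,p}_o(E)$ function dominating $(u-k_j)_+$, and an approximation argument (approximating $w$ by $C^\infty_o(E)$ functions and passing to $\min$) gives $u_{k_j}\in W^{1,p}(K_{16\rho}(x_o))$; letting $j\to\infty$, with $k_j\uparrow k$ by continuity of $g$, then yields the claim. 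So the gap in your proposal is not a matter of omitted routine details: the fact you assert is not true under the stated hypothesis, and the exhaustion-plus-cutoff mechanism (or something playing the same role) is essential. Your second step --- verifying the integral inequality once Sobolev membership is in hand --- is in the right spirit and is handled in the paper by reference to the argument on pp.~18--19 of \cite{dibe-sv}.
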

\begin{proof}
  We first claim that $u_k(\cdot,\tau)\in W^{1,p}(K_{16\rho}(x_o))$
  for almost all $s<\tau<t$. We may assume that $k=\sup_\Sigma g$, as
  the general case follows by repeating a part of the argument using
  the fact that $(u-k)_+\leq (u-\sup_{\Sigma} g)_+$.

  Let $E_j$, $j=1,2,\ldots$ be open subsets exhausting $E$, i.e.
  $E_j\subset E_{j+1}\subset \ldots \subset E$ with compact inclusions,
  and $E=\cup_{j=1}^{\infty}E_j$.  We set
  \begin{displaymath}
    k_j=\sup_{Q\cap \overline{E}\times[s,t]\setminus E_j\times[s,t]}g,
  \end{displaymath}
  and note that $k_j\to k$ as $j\to \infty$ by the continuity of $g$,
  so it suffices to prove the claim for each $k_j$. 

  To proceed, pick $\varphi\in C_o^{\infty}(E)$ with $0\leq \varphi \leq 1$
  and $\varphi=1$ in $\overline{E}_j$, and define
  \begin{displaymath}
    w=(1-\varphi)(u-g)_++\varphi (u-k_j)_+.
  \end{displaymath}
  Then $(u-k_j)_+\leq w $ in $Q\cap E_T$, and $w(\cdot, \tau)\in
  W^{1,p}_o(E)$ for a.e. $\tau\in(s,t)$.  We may choose $\eta_l\in
  C_o^\infty(E)$ converging to $w(\cdot,\tau)$ in $W^{1,p}(E)$. By
  standard facts about first-order Sobolev spaces,
  $\min(u_{k_j},\eta_l)$ converges to $\min(u_{k_j},w)$ in $
  W^{1,p}(K_{16\rho}(x_o))$. Now, since $(u-k_j)_+\leq w$, the
  conclusion follows from the fact that $\min(u_{k_j},w)=u_{k_j}$.

  The fact that $u_k$ satisfies the integral inequality for
  sub-solutions in $Q$ follows by arguing as in pp. 18-19 of
  \cite{dibe-sv}.
\end{proof}

Let $k$ be any number such that $k\ge \sup_{\Sigma} g$, and for $u_k$ as in Lemma~\ref{Lem:2:0},
\begin{equation}\label{Eq:super}
\left\{
\begin{aligned}
&\mu=\sup_Q u_k,\\ 
&v:Q\rightarrow\rr_+,\quad v\df=\mu-u_k.
\end{aligned}
\right.
\end{equation}
It is not hard to verify that $v$ is a weak super-solution to
\eqref{Eq:1:6} in the whole $Q$. 

\begin{proposition}\label{Prop:A:1:1} 
Let $(x_o,t)\in S_T$, $s$ such that $0< s<t< T$,
and 
$Q$ as in \eqref{Eq:cyl1}.  There exists a positive constant 
$\gm$ depending only on the data $\datap$, such that 
\begin{equation}\label{Eq:A:1:2}
\sup_{s<\tau<t}\int_{K_\rho(x_o)} v(x,\tau)dx\le\gm
\int_{K_{2\rho}(x_o)}v(x,t)dx
+\gm\Big(\frac{t-s}{\rho^\lm}\Big)^{\frac1{2-p}}
\end{equation}
where 
\begin{equation*}%%\label{Eq:A:1:3} 
\lm=N(p-2)+p,
\end{equation*}
and $v$ has been defined in \eqref{Eq:super}. 
The constant $\gm=\gm(p)\to\infty$ either as $p\to2$ or as 
$p\to1$.
\end{proposition}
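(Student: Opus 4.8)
The plan is to prove this boundary $L^1$ Harnack-type inequality by testing the weak super-solution formulation of $v$ with a suitably constructed test function, following the standard DiBenedetto--Gianazza--Vespri scheme for $L^1$-type estimates in the singular range, adapted to the boundary setting. First I would fix a time level $\tau\in(s,t)$ and work on the cylinder $K_{2\rho}(x_o)\times[\tau,t]$. Because $v$ is a non-negative weak super-solution on all of $Q$ (here the zero extension outside $E_T$ is what makes this genuinely a statement on the full cube, with no regularity assumed on $\partial E$), I can use in the weak formulation a test function of the form $\varphi = \zeta^p \, (v+\varepsilon)^{-(p-1)}$, or more precisely a power $(v+\varepsilon)^{\beta}$ with $\beta<0$ chosen appropriately for the singular range, multiplied by the $p$-th power of a cutoff $\zeta(x)$ that equals $1$ on $K_\rho(x_o)$, vanishes outside $K_{2\rho}(x_o)$, and satisfies $|D\zeta|\le c/\rho$. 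The point of the negative power is that in the singular case $1<p<2$ one does not directly estimate $\int v$ but rather works with $v^{\text{small power}}$ or $\log(v+\varepsilon)$; the diffusion term $\int \mathbf{A}(x,t,v,Dv)\cdot D\varphi$ then produces, thanks to the structure conditions \eqref{Eq:1:2}, a good term $\int \zeta^p |Dv|^p (v+\varepsilon)^{\beta-1}$ with a favourable sign and a bad term controlled by Young's inequality against $\rho^{-p}\int (v+\varepsilon)^{\beta+p-1}$.

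Next I would integrate the time-derivative term $\int v_t \varphi$, which yields (up to the sign coming from $\beta<0$) a quantity comparable to $\int_{K_{2\rho}} \zeta^p (v+\varepsilon)^{\beta+1}\,dx$ evaluated between $\tau$ and $t$; rearranging, one gets
\begin{equation*}
\sup_{\tau<\sigma<t}\int_{K_\rho(x_o)}(v(x,\sigma)+\varepsilon)^{\beta+1}\,dx \le \gamma\int_{K_{2\rho}(x_o)}(v(x,t)+\varepsilon)^{\beta+1}\,dx + \frac{\gamma}{\rho^p}\int_\tau^t\int_{K_{2\rho}(x_o)}(v+\varepsilon)^{\beta+p-1}\,dx\,dt.
\end{equation*}
The exponent $\beta$ should be chosen so that $\beta+1$ is small and positive, say $\beta+1 = \varepsilon_0$ for a fixed small $\varepsilon_0=\varepsilon_0(p)\in(0,1)$; then $\beta+p-1 = \varepsilon_0+p-2 < \varepsilon_0$, so the space-time integral on the right has a \emph{lower} power of $v$ and can be interpolated. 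This is exactly where the singular restriction $p<2$ is used in an essential way (if $p\ge2$ the sign of various terms flips and the argument is different). I would then let $\varepsilon\downarrow0$; since $\beta+1>0$ there is no singularity issue and one recovers an inequality for $\int v^{\varepsilon_0}$.

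The final step is to convert the $\int v^{\varepsilon_0}$ estimate into the claimed $\int v$ estimate. For this I would use Hölder's inequality to pass from $v^{\varepsilon_0}$ back to $v$: $\int_{K_\rho} v \le (\int_{K_\rho} v^{\varepsilon_0})^{1/\varepsilon_0}$-type bounds are the wrong direction, so instead the cleaner route is a De Giorgi--Nash--Moser style iteration over a sequence of shrinking cubes and decreasing exponents, combined with a Young-inequality absorption of the form $X \le \tfrac12 X + (\text{stuff})$, producing the two terms on the right of \eqref{Eq:A:1:2}: the $\int_{K_{2\rho}} v(\cdot,t)$ term from the boundary-in-time data, and the term $\gamma\big(\tfrac{t-s}{\rho^\lambda}\big)^{1/(2-p)}$ coming from dimensional balancing of the space-time integral (here $\lambda = N(p-2)+p$ appears precisely as the scaling exponent that makes $\rho^{-p}\cdot |Q|\cdot(\text{measure})^{\text{power}}$ homogeneous, and the exponent $1/(2-p)$ is the reciprocal of the deficit $2-p$ in the interpolation). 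I would track constants to confirm the blow-up $\gamma(p)\to\infty$ as $p\to2$ (the $1/(2-p)$ factor) and as $p\to1$ (the iteration over exponents degenerates, or equivalently the Sobolev embedding constants blow up).

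The main obstacle I anticipate is the interpolation/iteration step that downgrades from the natural energy quantity $\int v^{\varepsilon_0}$ with $\varepsilon_0<1$ back to $\int v$ with the correct, sharp powers of $(t-s)$ and $\rho$: one must choose the exponent $\varepsilon_0$, the geometric ratios of the nested cubes, and apply a fast-geometric-convergence lemma, all while keeping the dependence on $p$ explicit enough to see the stated blow-up behaviour; getting the exponent $\lambda=N(p-2)+p$ and the power $\tfrac1{2-p}$ exactly right is the delicate bookkeeping. A secondary technical point is justifying the use of the negative-power test function $\varphi=\zeta^p(v+\varepsilon)^{\beta}$ — it is admissible only after a Steklov-average regularization in time, and one has to check it lies in $W^{1,2}(0,T;L^2)\cap L^p(0,T;W^{1,p}_o)$ of the cube, which is where the zero extension of $v$ across $\partial E$ and the fact that $v$ is a super-solution on the \emph{whole} $Q$ (not just on $Q\cap E_T$) are used.
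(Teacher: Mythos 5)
The paper's own ``proof'' of Proposition~\ref{Prop:A:1:1} is a citation to Proposition~A.1.3 in Appendix~A of \cite{DBGV-mono}, where the scheme is: (i) test the super-solution inequality for $v$ with the \emph{plain} cutoff $\varphi=\zeta^p(x)$ (not with any power of $v$), which gives
\[
\int_{K_\rho}v(x,\tau)\,dx\le \int_{K_{2\rho}}v(x,t)\,dx+\frac{\gamma}{\rho}\int_\tau^t\!\!\int_{K_{2\rho}}|Dv|^{p-1}\,dx\,d\sigma;
\]
(ii) bound the space-time integral of $|Dv|^{p-1}$ by a separate energy/interpolation lemma --- precisely the content of Lemma~\ref{Lm:A:1:2} --- which produces the small parameter $\delta\sup_\tau\int_{K_\rho}v$ plus the dimensional term $\big(\tfrac{t-s}{\rho^\lambda}\big)^{1/(2-p)}$; (iii) absorb the $\delta$-term. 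Your proposal never splits the argument this way, and that is where it breaks.

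Concretely, the negative-power test function $\varphi=\zeta^p(v+\varepsilon)^\beta$, $-1<\beta<0$, does produce the Caccioppoli-type inequality you write down: after absorbing the $\zeta^p(v+\varepsilon)^{\beta-1}|Dv|^p$ term you get
\[
\sup_{\tau}\int_{K_\rho}(v+\varepsilon)^{\beta+1}\,dx\le \gamma\int_{K_{2\rho}}(v(t)+\varepsilon)^{\beta+1}\,dx+\frac{\gamma(\beta)}{\rho^p}\iint(v+\varepsilon)^{\beta+p-1}\,dx\,d\sigma.
\]
But this is an estimate for $\int v^{\beta+1}$ with $\beta+1<1$, and you acknowledge that there is no Hölder route from $\int v^{\beta+1}$ back up to $\int v$. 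The proposed fix --- ``a De Giorgi--Nash--Moser style iteration over shrinking cubes and decreasing exponents'' --- does not close this gap and is not spelled out: a Moser-type iteration can raise the exponent only via a Sobolev embedding, which costs an additional power of the gradient that your scheme has already discarded in the absorption; and if instead you try to recover $\int v$ by sending $\beta\to 0^-$, the absorption constant blows up (the coefficient of the good $|Dv|^p$ term is $|\beta|(\beta+1)C_o$, which vanishes), so the limit is not uniform. In other words, the $\varphi=\zeta^p(v+\varepsilon)^\beta$ test function is the right tool to prove the auxiliary gradient estimate of Lemma~\ref{Lm:A:1:2} (where it is combined with Hölder to isolate $\iint|Dv|^{p-1}$), but it cannot by itself yield the $L^1$--$L^1$ estimate \eqref{Eq:A:1:2}; for that you must start from $\varphi=\zeta^p$ to keep the quantity $\int v$ intact, and feed in the gradient estimate as an independent lemma. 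The missing step in your argument is exactly this separate lemma and its coupling with the $\zeta^p$-tested inequality; the rest of the bookkeeping (the exponent $\lambda$, the power $1/(2-p)$, the blow-up of $\gamma(p)$ as $p\to 1,2$) is consistent with the correct argument.
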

For $\lm>0$, the parameter $p$ is in the singular, 
supercritical range $(\frac{2N}{N+1},2)$, 
and if $\lm\le0$, $p$ is in the singular, critical 
and subcritical range $(1,\frac{2N}{N+1}]$.  
However, the Harnack-type estimate (\ref{Eq:A:1:2}), in the 
topology of $L^1_{\loc}$,  holds true for all $1<p<2$ 
and accordingly, $\lm$ could be of either sign. 
\begin{proof}
See Proposition~A.1.3 in \cite[Appendix~A]{DBGV-mono}. Here conditions~\eqref{Eq:5:1:3}--\eqref{Eq:5:1:4} are not needed in the proof.
\end{proof}
%%%%%%%%%%%%%%
\noi We also need the following result, whose proof can be found as before in \cite[Appendix~A]{DBGV-mono}.
\begin{lemma}\label{Lm:A:1:2} 
Let $(x_o,t)\in S_T$, $s$ such that $0< s<t< T$,
and 
$v$ as defined in \eqref{Eq:super}. There exists a positive constant 
$\gm$ depending only on the data, such that for all $\sig\in(0,1)$, 
\begin{align*}%%\label{Eq:A:1:5}
\frac1{\rho}\int_s^t\int_{K_{\sig\rho}(x_o)}
|D v|^{p-1}dx\,d\tau&\le\dl\sup_{s<\tau<t}\int_{K_\rho(x_o)} v(x,\tau)dx\\
&+\frac{\gm(p)}{[\dl^2(1-\sig)^{p}]^{\frac{p-1}{2-p}}}
\Big(\frac{t-s}{\rho^\lm}\Big)^{\frac1{2-p}}
\end{align*}
for all $\dl\in(0,1)$.
The constant $\gm(p)\to\infty$ as either $p\to 1,2$.\index{Stability of constants}
\end{lemma}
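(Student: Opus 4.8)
\textbf{Proof proposal for Lemma~\ref{Lm:A:1:2}.}
The plan is to estimate the space--time integral of $|Dv|^{p-1}$ against an energy quantity, following the classical Caccioppoli/De~Giorgi device adapted to the $L^1$ Harnack machinery of Proposition~\ref{Prop:A:1:1}. First I would fix a cutoff $\zeta\in C_o^\infty\big(K_\rho(x_o)\big)$ with $\zeta\equiv1$ on $K_{\sigma\rho}(x_o)$, $0\le\zeta\le1$, and $|D\zeta|\le \gm/[(1-\sigma)\rho]$, together with a Lipschitz time cutoff that vanishes near $\tau=s$. Testing the weak formulation for the super-solution $v$ (equivalently, for the sub-solution $u_k$) with a test function built from $\zeta^p$ and a truncation of $v$ — concretely $\vp=\zeta^p\,\eta_\varepsilon(v)$ where $\eta_\varepsilon$ is a bounded increasing approximation of the identity on $v>0$ — yields, after letting $\varepsilon\to0$ and using the structure conditions \eqref{Eq:1:2}, a Caccioppoli inequality of the schematic form
\[
\esssup_{s<\tau<t}\int_{K_\rho(x_o)}\zeta^p v\,dx
+\frac{C_o}{2}\iint_{Q}\zeta^p|Dv|^p\,dx\,d\tau
\le \gm\iint_{Q}|D\zeta|^p v^{p}\,dx\,d\tau + (\text{lower order}),
\]
where the lower-order term is controlled by $(t-s)/\rho^\lambda$ raised to the appropriate power via the sup-bound already available from Proposition~\ref{Prop:A:1:1}. (This is exactly the energy estimate recorded in \cite[Appendix~A]{DBGV-mono}; here it is legitimate because $v$ is a genuine super-solution in all of $Q$, so no boundary terms on $\Sigma$ intervene.)

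Next I would pass from $|Dv|^p$ to $|Dv|^{p-1}$ by Hölder's inequality in the space--time cylinder $K_{\sigma\rho}(x_o)\times(s,t)$: writing
\[
\frac1\rho\iint_{K_{\sigma\rho}(x_o)\times(s,t)}|Dv|^{p-1}\,dx\,d\tau
\le \frac1\rho\Big(\iint |Dv|^p\,dx\,d\tau\Big)^{\frac{p-1}{p}}
\big(|K_{\sigma\rho}(x_o)|\,(t-s)\big)^{\frac1p},
\]
and then inserting the Caccioppoli bound for $\iint\zeta^p|Dv|^p$. The resulting right-hand side is a product of a power of the energy $\esssup_\tau\int_{K_\rho}v\,dx$ with powers of $(1-\sigma)^{-1}$, of $\rho$, and of $(t-s)$; the exponents combine — using $\lambda=N(p-2)+p$ and $|K_{\sigma\rho}(x_o)|\sim\rho^N$ — into precisely the dimensionally correct quantity $\big((t-s)/\rho^\lambda\big)^{1/(2-p)}$ appearing in the statement, but multiplied by a power of $\esssup_\tau\int_{K_\rho}v\,dx$ strictly less than one.

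The final, and in my view only genuinely delicate, step is the interpolation that converts this sublinear dependence on the energy into the stated affine bound with an arbitrarily small coefficient $\delta$ in front of $\esssup_{s<\tau<t}\int_{K_\rho(x_o)}v\,dx$. This is a Young-inequality balancing: one has an estimate of the form $X\le A\,M^{\theta}+B$ with $M=\esssup_\tau\int_{K_\rho}v\,dx$, $0<\theta<1$, and $A,B$ explicit; applying Young's inequality $A M^\theta\le \delta M+\gm\,\delta^{-\theta/(1-\theta)}A^{1/(1-\theta)}$ produces the claimed form, and one must check that $1/(1-\theta)$ and the way $A$ collects the factors $\delta^{-2(p-1)/(2-p)}$ and $(1-\sigma)^{-p(p-1)/(2-p)}$ reproduce exactly the exponent $(p-1)/(2-p)$ on $[\delta^2(1-\sigma)^p]$ in the denominator of the second term. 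I expect the bookkeeping of these exponents — rather than any analytic difficulty — to be the main obstacle, and I would carry it out by first doing the $p=\tfrac{2N}{N+1}$ (i.e. $\lambda=0$) scaling-invariant case to pin down $\theta$ and then restoring the general $\lambda$ by homogeneity. The blow-up of $\gm(p)$ as $p\to1$ or $p\to2$ is then transparent from the factors $1/(2-p)$ in the exponents and from the constants inherited from Proposition~\ref{Prop:A:1:1}.
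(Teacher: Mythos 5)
You set out to prove the lemma by (i) a direct Caccioppoli estimate for $\iint\zeta^p|Dv|^p$, (ii) H\"older to pass from $|Dv|^p$ to $|Dv|^{p-1}$, and (iii) Young's inequality to split off $\delta\sup_{\tau}\int v$. Steps (ii)--(iii) are structurally sound --- indeed your final bookkeeping with $\theta=2(p-1)/p$ reproduces the stated exponents on $\delta$ and $(1-\sigma)$ --- but step (i) as written does not hold, and the gap is not cosmetic: it leaves an uncontrolled $\mu$ in the conclusion. Testing with $\varphi=\zeta^p\eta_\varepsilon(v)$ and letting $\varepsilon\to0$ cannot produce your schematic inequality. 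If $\eta_\varepsilon(v)\to v$, the time term after integration by parts is $\sup_\tau\int\zeta^p v^2$, quadratic and not linear in $v$; if $\eta_\varepsilon(v)$ tends to the indicator of $\{v>0\}$, the coercive term $\iint\zeta^p|Dv|^p$ vanishes (that choice is exactly what yields Proposition~\ref{Prop:A:1:1}, not this lemma). In either case the correct unweighted Caccioppoli has $\iint|D\zeta|^p v^p$ and a term quadratic in $v$ on the right, both strictly super-linear in $v$. Since the target inequality is affine in $\sup_\tau\int_{K_\rho}v\,dx$ and carries no $\mu$-dependence, those powers can only be reduced using $v\le\mu$ (e.g.\ $v^p\le\mu^{p-1}v$), and the resulting factor of $\mu=\sup_Q u_k$ survives the H\"older and Young steps. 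What you would actually prove is a weaker estimate containing a positive power of $\mu$, which is not controlled by $\sup_\tau\int v$ or $(t-s)/\rho^\lambda$.

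The mechanism that makes the lemma $\mu$-free is the one in \cite[Appendix~A]{DBGV-mono}, to which the paper defers: one tests with a \emph{negative} power of $v$, schematically $\varphi=\zeta^p(v+\varepsilon)^{-\alpha}$ with $\alpha>0$, obtaining a \emph{weighted} Caccioppoli bound of the form
\[
\iint_Q\zeta^p\frac{|Dv|^p}{(v+\varepsilon)^{1+\alpha}}\,dx\,d\tau
\le\gamma\iint_Q|D\zeta|^p(v+\varepsilon)^{p-1-\alpha}\,dx\,d\tau
+\gamma\sup_{s<\tau<t}\int_{K_\rho(x_o)}\zeta^p(v+\varepsilon)^{1-\alpha}\,dx,
\]
where both right-hand terms are genuinely sublinear in $v$. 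One then applies H\"older in the weighted form
\[
\iint_Q\zeta^p|Dv|^{p-1}\,dx\,d\tau\le
\Big(\iint_Q\zeta^p\frac{|Dv|^p}{(v+\varepsilon)^{1+\alpha}}\,dx\,d\tau\Big)^{\frac{p-1}{p}}
\Big(\iint_Q\zeta^p(v+\varepsilon)^{(1+\alpha)(p-1)}\,dx\,d\tau\Big)^{\frac1p},
\]
choosing $\alpha$ so that $(1+\alpha)(p-1)\le1$; the last factor is then comparable to $(t-s)\sup_\tau\int v$ with no $\mu$. After this your Young interpolation applies essentially verbatim and gives the stated constant. Without the weight $(v+\varepsilon)^{-1-\alpha}$, the interpolation does not close, so replacing your step (i) by this weighted estimate is what the argument needs.
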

%%%%%%%%%%%%%%
\begin{remark}\label{Rmk:2:1}
{\normalfont If we choose
\[
\bar s=t-\left[\int_{K_{2\rho}(x_o)}v(x,t)dx\right]^{2-p}\rho^\lm,
\]
and $\bar s>0$, then Proposition~\ref{Prop:A:1:1} yields
\[
\sup_{\bar s<\tau<t}\int_{K_\rho(x_o)}v(x,\tau)dx\le\gm\int_{K_{2\rho}(x_o)}v(x,t)dx.
\]}
\end{remark}
\begin{remark}\label{Rmk:2:2}
{\normalfont The choice of $k$ in the definition of $u_k$ is done in order to guarantee
that $u_k$ can be extended to zero in $Q\backslash E_T$: this yields a function which is defined on the whole $Q$, and is needed in Proposition~\ref{Prop:A:1:1}, and Lemmas~\ref{Lm:A:1:2} and \ref{Lm:5:1}. Therefore, any other choice of $k$ which ensures the same extension of $u$ to the whole $Q$ is allowed.}
\end{remark}
\noi Whenever we deal with a {\it solution}, and not just a super-solution, then 
it has been shown in Proposition~A.1.1 of \cite[Appendix~A]{DBGV-mono}
that the statement is much more general, as we have the following result.
%%%%%%%%%%%%
\begin{proposition}\label{Prop:A:1:1bis} 
Let $u$ be a nonnegative, 
local, weak solution to the singular equations 
(\ref{Eq:1:1})--(\ref{Eq:1:2}), 
for $1<p<2$, in $E_T$.  There exists a positive constant 
$\gm$ depending only on the data $\datap$, such that 
for all cylinders $K_{2\rho}(y)\times[s,t]\subset E_T$, 
\begin{equation}\label{Eq:A:1:2bis}
\sup_{s<\tau<t}\int_{K_\rho(y)} u(x,\tau)dx\le\gm
\inf_{s<\tau<t}\int_{K_{2\rho}(y)}u(x,\tau)dx
+\gm\Big(\frac{t-s}{\rho^\lm}\Big)^{\frac1{2-p}}.
\end{equation}
The constant $\gm=\gm(p)\to\infty$ either as $p\to2$ or as 
$p\to1$.
\end{proposition}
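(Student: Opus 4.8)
The plan is to exploit that a nonnegative solution is at once a nonnegative sub-solution and a nonnegative super-solution, to derive from each a one-sided $L^1$-estimate --- one ``looking forward'' and one ``looking backward'' in time --- and then to glue the two at a time level which almost realizes the infimum on the right of \eqref{Eq:A:1:2bis}.

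From the super-solution property, the argument of Proposition~\ref{Prop:A:1:1}, which uses only the structure \eqref{Eq:1:1}--\eqref{Eq:1:2} and applies verbatim with $u$ in place of $v$ on every subcylinder $K_{2\rho}(y)\times[\tau_1,\tau_2]\subset E_T$, gives the \emph{forward estimate}, with the first right-hand term at the later time $\tau_2$,
\[
\sup_{\tau_1<\tau<\tau_2}\int_{K_\rho(y)}u(x,\tau)\,dx\le\gm\int_{K_{2\rho}(y)}u(x,\tau_2)\,dx+\gm\Big(\frac{\tau_2-\tau_1}{\rho^\lm}\Big)^{\frac1{2-p}}.
\]
For the companion \emph{backward estimate}, with that term at the earlier time $\tau_1$, I would use the sub-solution property: test the weak formulation with $\vp=\zeta^p$, where $\zeta\in C_o^\infty(K_{2\rho}(y))$, $\zeta\equiv1$ on $K_\rho(y)$, $0\le\zeta\le1$, $|D\zeta|\le\gm/\rho$ (a Steklov average makes this admissible), and use $\vp_t\equiv0$. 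Since $u$ is a sub-solution, and since $|\bl A|\le C_1|Du|^{p-1}$ and $|D(\zeta^p)|\le p\,\zeta^{p-1}|D\zeta|$, this yields, for $\tau_1<\tau\le\tau_2$,
\[
\int_{K_\rho(y)}u(x,\tau)\,dx\le\int_{K_{2\rho}(y)}u(x,\tau_1)\zeta^p\,dx+\frac{\gm}\rho\int_{\tau_1}^{\tau_2}\!\!\int_{K_{2\rho}(y)}|Du|^{p-1}\,dx\,d\tau.
\]

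The heart of the matter is to absorb the gradient integral, which is exactly what Lemma~\ref{Lm:A:1:2} is designed for: applied to $u$ as a nonnegative super-solution it controls $\rho^{-1}\iint|Du|^{p-1}\,dx\,d\tau$ by $\dl\sup_\tau\int u\,dx+\gm(\dl,p)(\tfrac{\tau_2-\tau_1}{\rho^\lm})^{1/(2-p)}$ for any $\dl\in(0,1)$. The subtlety --- and what I expect to be the main obstacle --- is that the test function carries the gradient out to $\partial K_{2\rho}(y)$, while Lemma~\ref{Lm:A:1:2} reaches only $K_{\sig\cdot 2\rho}(y)$; so one must run this on a finite increasing sequence of radii confined to $[\rho,2\rho]$, tuning $\dl$ and $\sig$ to the radius increments (and accounting for how the constant in Lemma~\ref{Lm:A:1:2} degenerates as $\sig\to1$) so that the geometric series of the error terms converges. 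This step is also where the singular scaling $(\tfrac{\tau_2-\tau_1}{\rho^\lm})^{1/(2-p)}$ is produced and where the $p$-dependence of all constants must be kept under control so that $\gm(p)\to\infty$ only as $p\to1$ or $p\to2$. Carrying this out, taking the supremum over $\tau$ on the left and absorbing, yields the backward estimate
\[
\sup_{\tau_1<\tau<\tau_2}\int_{K_\rho(y)}u(x,\tau)\,dx\le\gm\int_{K_{2\rho}(y)}u(x,\tau_1)\,dx+\gm\Big(\frac{\tau_2-\tau_1}{\rho^\lm}\Big)^{\frac1{2-p}}.
\]

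Finally, since $u\in C(0,T;L^2_{\loc})$ the map $\tau\mapsto\int_{K_{2\rho}(y)}u(\cdot,\tau)\,dx$ is continuous, so I can fix $\tau^*\in(s,t)$ with $\int_{K_{2\rho}(y)}u(x,\tau^*)\,dx\le 2\inf_{s<\tau<t}\int_{K_{2\rho}(y)}u(x,\tau)\,dx$. Applying the forward estimate on $[s,\tau^*]$ and the backward estimate on $[\tau^*,t]$, and noting $\tau^*-s,\,t-\tau^*\le t-s$, each of them bounds the corresponding part of $\sup_{s<\tau<t}\int_{K_\rho(y)}u(\cdot,\tau)\,dx$ by $\gm\int_{K_{2\rho}(y)}u(\cdot,\tau^*)\,dx+\gm(\tfrac{t-s}{\rho^\lm})^{1/(2-p)}$; taking the larger of the two and recalling the choice of $\tau^*$ gives \eqref{Eq:A:1:2bis}. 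Everything beyond the gradient absorption is then just a bookkeeping of the estimates already at hand.
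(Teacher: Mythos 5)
The paper proves Proposition~\ref{Prop:A:1:1bis} only by citing Proposition~A.1.1 of \cite[Appendix~A]{DBGV-mono}, and your proposal reconstructs the argument behind that result along the same lines: a forward one-sided $L^1$-estimate from the super-solution half (Proposition~\ref{Prop:A:1:1}), a backward one from the sub-solution half tested against a spatial cutoff $\zeta^p$, absorption of the gradient term via Lemma~\ref{Lm:A:1:2}, and gluing the two at an intermediate time to produce the infimum on the right-hand side.

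The one imprecision is your remark that the absorption runs over a \emph{finite} increasing family of radii. Since the only a priori control on $\sup_\tau\int_{K_{2\rho}}u$ is qualitative (from $u\in C(0,T;L^2_{\loc})$), the term carrying $\sup_\tau\int_{K_R}u$ with $R$ close to $2\rho$ must be driven to zero, which requires an \emph{infinite} iteration --- equivalently the De Giorgi interpolation lemma: if $f(r)\le\tfrac12 f(R)+B+A(R-r)^{-\beta}$ for all $\rho\le r<R\le 2\rho$, then $f(\rho)\le\gm\bigl(B+A\rho^{-\beta}\bigr)$. This is applied with the free parameter $\dl$ of Lemma~\ref{Lm:A:1:2} chosen proportional to $(R-r)/\rho$ so that the coefficient multiplying $f(R)$ is exactly $\tfrac12$; the stated degeneracy of $\gm(p)$ as $p\to1,2$ then emerges from the exponents $\tfrac{p-1}{2-p}$ and $\tfrac1{2-p}$ in that tuning. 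Note also that this absorption is exactly what is already carried out inside the cited Proposition~\ref{Prop:A:1:1}, so the backward half requires no new device, only the opposite sign from the sub-solution inequality. Beyond that, your gluing is fine --- and can be streamlined: since the combined bound holds for every $\tau^*\in(s,t)$, you may simply take the infimum over $\tau^*$ at the end rather than pick a near-minimizer --- and the outline matches the reference the paper relies on.
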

%%%%%%%%%%%%%%%%%%%%%%%%%%%%%%%%%%%%%
\section{A Weak Harnack Inequality}
The Weak Harnack Inequality for non-negative super-solutions to equations
~\eqref{Eq:1:1}--\eqref{Eq:1:2} and $p>2$ has been proved in \cite{kuusi2008}. In the singular range $\frac{2N}{N+1}<p<2$, as we are considering here, to our knowledge it has been proved in \cite{BIV2010}, but only for the prototype parabolic $p$-laplacian.

Here we prove it under the more general conditions \eqref{Eq:1:2}, but we rely on the Comparison Principle, and therefore we also need \eqref{Eq:5:1:3}--\eqref{Eq:5:1:4}. It remains an open problem to prove the Weak Harnack Inequality in the range $\frac{2N}{N+1}<p<2$ for the general case, without relying on any kind of comparison. 

It is important to mention that the approach we follow here is closely modelled on the arguments of \cite{chen-dibe,fornaro-vespri}.
\begin{proposition}\label{Prop:3:1}
Let $u$ be a nonnegative, local, weak super-solution 
to equations \eqref{Eq:1:1}--\eqref{Eq:5:1:4}.  
There exist 
constants $c\in(0,1)$ and $\eta\in(0,1)$, depending only on the data 
$\datap$, such that for a.e. $s\in(0,T)$
\begin{equation}\label{Eq:3:7:1}
\inf_{K_{2\rho}(y)}u(\cdot,t)\ge \eta\bint_{K_{2\rho}(y)}u(x,s)dx
\end{equation}
for all times $\dsty s+{\txty\frac34}\theta\rho^p\le t\le s+\theta\rho^p$, where
\begin{equation}\label{Eq:3:7:2}
\theta\df=c\bigg[\dashint_{K_{2\rho}(y)}u(x,s)\,dx\bigg]^{2-p},
\end{equation}
provided that $\dsty K_{16\rho}(y)\times[s,s+\theta\rho^p]\subset E_T$.
\end{proposition}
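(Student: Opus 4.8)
The plan is to establish the weak Harnack inequality \eqref{Eq:3:7:1} by the classical strategy that combines an $L^1$-type expansion of positivity with a measure-theoretic/De Giorgi iteration, following the blueprint of \cite{chen-dibe,fornaro-vespri}. First I would normalize: after a translation we may assume $(y,s)=(0,0)$, and after dividing $u$ by the average $a\df=\dashint_{K_{2\rho}}u(x,0)\,dx$ we may assume $a=1$, so that $\theta=c$ and the conclusion becomes $\inf_{K_{2\rho}}u(\cdot,t)\ge\eta$ for $\tfrac34 c\rho^p\le t\le c\rho^p$. The starting point is the $L^1$-information propagated forward in time: by Proposition~\ref{Prop:A:1:1bis} (or the comparison-free Proposition~\ref{Prop:A:1:1} applied to the solution viewed as its own super-solution, noting $u\ge0$), combined with Remark~\ref{Rmk:2:1}, one gets that on a time interval of length comparable to $a^{2-p}\rho^\lm=\rho^p$ (after normalization) the spatial integral $\int_{K_\rho}u(x,\tau)\,dx$ stays bounded below by a fixed fraction of its value at $\tau=0$, hence bounded below by a structural constant. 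This is the crucial mechanism that turns the average hypothesis at the single time slice $s$ into a quantitative lower bound on $\int_{K_\rho}u(x,\tau)\,dx$ throughout $(0,c\rho^p]$ for a suitable small $c$.

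Next I would convert this $L^1$-lower bound into a pointwise lower bound. The standard route has two steps. Step one: a measure-theoretic lemma showing that, on each time slice $\tau$ in the relevant range, the set $\{x\in K_{2\rho}: u(x,\tau)>\lambda\}$ has measure at least a fixed fraction of $|K_{2\rho}|$ for some structural $\lambda>0$; this follows by combining the $L^1$-lower bound with the energy estimates (Caccioppoli / logarithmic estimates for super-solutions), which control how fast $u$ can drop, and it uses the gradient estimate in the $L^{p-1}$ topology from Lemma~\ref{Lm:A:1:2} together with a Sobolev/Poincaré inequality to prevent $u$ from being small on most of the cube. Step two: an expansion of positivity / De Giorgi-type argument (the singular-$p$ version, where the intrinsic waiting time scales like $a^{2-p}\rho^p$) upgrades "positive measure of a super-level set on a time slice" to "$u\ge\eta$ on the full cube $K_{2\rho}$ at the later times $t\in[\tfrac34 c\rho^p, c\rho^p]$." Here the Comparison Principle enters as flagged in the text: one compares $u$ with the solution of a Cauchy–Dirichlet problem on a sub-cylinder having the explicit barrier/initial data supported on the positivity set, and then invokes known pointwise lower bounds (or an explicit sub-solution) for that comparison function. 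The inclusion hypothesis $K_{16\rho}(y)\times[s,s+\theta\rho^p]\subset E_T$ provides exactly the room needed for the cube-doubling and for the $16\rho$-scale cylinders appearing in the $L^1$ Harnack estimates and energy inequalities.

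The main obstacle I expect is the singular, supercritical scaling: because $1<p<2$, the natural time-scale is intrinsic, $\theta\sim a^{2-p}\rho^p$, and one must arrange all the iterations — the $L^1$ propagation of Proposition~\ref{Prop:A:1:1bis}, the measure-shrinking lemma, and the expansion of positivity — to close on cylinders of this same intrinsic size, with all constants tracked so that the constraint $p>\tfrac{2N}{N+1}$ (equivalently $\lambda>0$) is genuinely used to make the forward-in-time $L^1$ bound survive with a fixed fraction rather than degenerating. In the subcritical range $\lambda\le0$ the error term $\bigl(\tfrac{t-s}{\rho^\lambda}\bigr)^{1/(2-p)}$ in \eqref{Eq:A:1:2bis} no longer scales correctly against $\rho^p$ after normalization, which is precisely why $p$ must be restricted here. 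A secondary technical point is making the dependence on the normalization constant $a$ harmless: since the equation is not homogeneous of degree one in $u$ under the structure conditions \eqref{Eq:1:2} alone, one should check that dividing by $a$ keeps the structural constants $C_o,C_1,\Lambda$ (up to harmless rescalings) so that all quoted estimates apply to $u/a$ with the same $\gm$; this is routine but must be stated. Finally, assembling the small constants $c$ and $\eta$ requires care: $c$ is chosen first (small enough for the $L^1$-lower bound to hold on $(0,c\rho^p]$), and then $\eta$ comes out of the expansion-of-positivity step on the sub-interval $[\tfrac34 c\rho^p, c\rho^p]$, both depending only on $\datap$.
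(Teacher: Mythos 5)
Your overall skeleton — normalize $(y,s)=(0,0)$, propagate the $L^1$ lower bound forward in time, convert this to a lower bound on the measure of a super-level set, then invoke expansion of positivity — matches the strategy of the paper, and you correctly identify why $p>\frac{2N}{N+1}$ is needed for the error term $(t-s)^{1/(2-p)}/\rho^{\lambda/(2-p)}$ to be absorbable. However, there is a genuine gap in your ``step one,'' the measure-theoretic lemma, and a misplacement of the comparison principle.

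The gap: you propose to deduce $|\{x\in K_{2\rho}:u(x,\tau)>\lambda\}|\ge\alpha|K_{2\rho}|$ from the $L^1$ lower bound together with Caccioppoli estimates and the $L^{p-1}$-gradient bound of Lemma~\ref{Lm:A:1:2} plus Sobolev/Poincar\'e. This does not work for a super-solution: the point is that a lower bound on $\int_{K_\rho}u\,dx$ says nothing about the measure of super-level sets unless one also has an $L^\infty$ \emph{upper} bound, so a Chebyshev-type argument can close. Super-solutions enjoy no $L^\infty$--$L^1$ smoothing; the mass could in principle concentrate on a small set. The $L^{p-1}$-bound on $Du$ is too weak to prevent this (the exponent $p-1<1$ defeats any Sobolev embedding argument). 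The paper resolves this precisely by invoking the comparison principle \emph{at the start}: it introduces the solution $v$ of the Cauchy--Dirichlet problem on $K_{16\rho}\times(0,\infty)$ with initial datum $u(\cdot,0)\chi_{K_{2\rho}}$ and zero lateral datum, so that $u\ge v$. Since $v$ is a genuine \emph{solution} with compactly supported $L^1$ initial data, the $L^\infty$--$L^1$ estimate (Theorem~2.1, Chapter~6, \cite{DBGV-mono}) gives $\sup_{K_{4\rho}\times(\delta/2,\delta)}v\le\gamma_1\dashint_{K_{2\rho}}u(\cdot,0)\,dx$, and Proposition~\ref{Prop:A:1:1bis} applied to $v$ gives the matching $L^1$ lower bound; combining the two yields the measure estimate for $v$ by a direct splitting of the integral — no gradient or Poincar\'e estimates are involved.

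Consequently, you have the comparison principle in the wrong step: the expansion of positivity used afterwards (Proposition~5.1 on p.\ 72 of \cite{DBGV-mono}) is comparison-free and applies to any nonnegative super-solution (here to $v$, hence to $u$ via $u\ge v$). Comparison is not used to produce a barrier in that final step; it is used at the outset to trade the super-solution $u$ for a solution $v$ to which the $L^\infty$--$L^1$ smoothing applies. Until you incorporate that substitution, the measure-theoretic step of your argument does not close.
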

%in .
%That is, if $u$ is a supersolution and $v$ is a subsolution to \eqref{Eq:1:1} in $Q$ then 
%\[
%u\ge v\quad\text{on }\pl_p Q\Rightarrow u\ge v \quad\text{a.e. in }Q.
%\]
%
\begin{proof}
Without loss of generality, we can assume $(y,s)=(0,0)$. 
Set $Q=K_{16\rho}\times(0,\infty)$ and let $v$ be the unique solution to the following problem.
\begin{equation*}
\left\{
\begin{aligned}
&v_t-\dvg \bl{A}(x,t,v,Dv)=0\quad\text{ in } Q;\\
&v(x,t)=0\quad\text{ on }\pl K_{16\rho}\times(0,\infty);\\
&v(x,0)=u(x,0)\chi_{K_{2\rho}}.
\end{aligned}
\right.
\end{equation*}
Then, Lemma~\ref{lem-comp} gives us
\begin{equation}\label{Eq:3:2}
u\ge v\qquad\text{ a.e. in }\ \ Q.
\end{equation}
Set
\begin{align*}
&\theta=\bigg[\dashint_{K_{8\rho}}v(x,0)\,dx\bigg]^{2-p}
=4^{-N(2-p)}\bigg[\dashint_{K_{2\rho}}u(x,0)\,dx\bigg]^{2-p},\\
&\dl=\frac1{(2\gm)^{2-p}}\theta\rho^p,
\end{align*}
where $\gm$ is the constant from Proposition~\ref{Prop:A:1:1bis}.
By Theorem 2.1 of \cite[Chapter~6]{DBGV-mono}, 
\[
\sup_{K_{4\rho}\times(\frac{\dl}2,\dl)}v\le\gm_1\dashint_{K_{2\rho}}u(\cdot,0)\,dx.
\]
%with $\gm$ depending only on $\{p,N,C_o,C_1\}$.
On the other hand, Proposition~\ref{Prop:A:1:1bis} yields
\begin{align*}
\sup_{0<\tau<\dl}\int_{K_{2\rho}}v(x,\tau)\,dx
&\le\gm\inf_{0<\tau<\dl}\int_{K_{4\rho}}v(x,\tau)\,dx+\gm\bigg(\frac{\dl}{\rho^\lm}\bigg)^{\frac1{2-p}}\\
&\le\gm\inf_{0<\tau<\dl}\int_{K_{4\rho}}v(x,\tau)\,dx+\frac12\int_{K_{2\rho}}v(x,0)\,dx.
\end{align*}
This gives
\[
\inf_{0<\tau<\dl}\int_{K_{4\rho}}v(x,\tau)\,dx\ge\frac1{2\gm}\int_{K_{2\rho}}v(x,0)\,dx.
\]
Let $c_o>0$ to be determined. From the previous estimates, for any $\frac\dl2<\tau<\dl$, 
we have  
\begin{align*}
&\frac1{2^{N+1}\gm}\dashint_{K_{2\rho}}v(x,0)\,dx\le\dashint_{K_{4\rho}}v(x,\tau)\,dx\\
=&\frac1{|K_{4\rho}|}\int_{[v(x,\tau)>c_o]\cap K_{4\rho}}v(x,\tau)\,dx+\frac1{|K_{4\rho}|}\int_{[v(x,\tau)\le c_o]\cap K_{4\rho}}v(x,\tau)\,dx\\
\le&\frac{|[v(x,\tau)>c_o]\cap K_{4\rho}|}{|K_{4\rho}|}\gm_1\dashint_{K_{2\rho}}u(x,0)\,dx+c_o.
\end{align*}
Hence, if we take
\[c_o=\frac1{2^{N+2}\gm}\,\dashint_{K_{2\rho}}u(x,0)\,dx,\]
then
\[
|[v(x,\tau)>c_o]\cap K_{4\rho}|\ge\frac1{2^{N+2}\gm\gm_1}|K_{4\rho}|\quad\text{for any }\frac\dl2<\tau<\dl.
\]
By Proposition 5.1 on p.72 of \cite{DBGV-mono}, this information gives 
\[
\inf_{K_{8\rho}\times(\frac34\dl,\dl)}v\ge \eta\,\,\dashint_{K_{2\rho}}u(x,0)\,dx
\]
where the constant $\eta$ depends only on $\{p,N,C_o,C_1\}$.
By \eqref{Eq:3:2},
\[
\inf_{K_{8\rho}\times(\frac34\dl,\dl)}u\ge \eta\,\,\dashint_{K_{2\rho}}u(x,0)\,dx,
\]
and since
\[
\inf_{K_{2\rho}\times(\frac34\dl,\dl)}u\ge\inf_{K_{8\rho}\times(\frac34\dl,\dl)}u
\]
we conclude.
\end{proof}
%%%%%%%%%%%%%%%%%%%%%%%%%%%%%%%%%%%%%
\section{A Notion of Capacity}
Let $\Om\subset\rn$ be an open set, and $Q\df=\Om\times(t_1,t_2)$: $Q$ is an open cylinder in $\rr^{N+1}$. In the following we will refer to such sets as {\it open parabolic cylinders}. For any $K\subset Q$ compact, we define the {\it parabolic capacity of $K$ with respect to $Q$} as 
\begin{equation}\label{Eq:4:1}
\begin{aligned}
\gm_p(K,Q)=&\inf\left\{\iint_Q|D\vp|^p\,dxdt:\right.\\ 
&\vp\in C^\infty_o(Q),\ \ \vp\ge1\ \ \text{ on a neighborhood of }\ \ K\Big\}.
\end{aligned}
\end{equation}
For $p=2$, this notion of parabolic capacity has been introduced in \cite{biroli-mosco87} in order to study the decay of solutions to parabolic obstacle problems relative to second order, linear operators, and then applied to parabolic quasiminima in \cite{marchi}. 

It is important to remark that different kinds of nonlinear parabolic capacity have been recently introduced in \cite{KKKP} and in \cite{AKP}. We will not go into details here, and we will prove our estimates in terms of $\gm_p$ as defined in \eqref{Eq:4:1}. 

In the following, we will state the main properties of $\gm_p$. {For} the proofs, we refer to \cite[Appendix]{biroli-mosco87}, where detailed calculations are given for $p=2$, and they can be easily extended to the general context we are considering here.

In the usual way, starting from \eqref{Eq:4:1}, we can define $\gm_p(E,Q)$, first for an open set $E\subset Q$, and then for an arbitrary $E\subset Q$. Moreover, it is not hard to check that if $Q_1\subset Q_2$ are open parabolic cylinders and
\[
E\subset \bar E\subset Q_1\subset Q_2,
\]
then
\[
\gm_p(E,Q_1)\ge\gm_p(E,Q_2).
\]
A first important result concerns sets of zero parabolic capacity. {Even though we do not need it in the following, nevertheless, we think it proper to state it here.}
\begin{proposition}
Let $E\subset \bar E\subset Q$. If $\gm_p(E,\rr^{N+1})=0$, then $\gm_p(E,Q)=0$.
\end{proposition}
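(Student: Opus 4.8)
The plan is to reduce the statement about $\gm_p(E,Q)$ to the corresponding statement for open sets, and then to construct explicit test functions. First I would recall that $\gm_p(E,Q)$ for an arbitrary $E$ is defined via the outer-regular procedure: $\gm_p(E,Q)=\inf\{\gm_p(O,Q):O\supset E,\ O\text{ open},\ \bar O\subset Q\}$ (and for open $O$ it is the infimum of $\iint_O|D\vp|^p$ over admissible $\vp$). Since $\gm_p(E,\rr^{N+1})=0$, for every $\eps>0$ there is $\vp_\eps\in C^\infty_o(\rr^{N+1})$ with $\vp_\eps\ge1$ on a neighborhood of $E$ and $\iint_{\rr^{N+1}}|D\vp_\eps|^p\,dx\,dt<\eps$. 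The difficulty is that $\vp_\eps$ need not be supported in $Q$, so it is not directly admissible for $\gm_p(\cdot,Q)$.

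The key step is a cut-off argument. Fix a compactly-contained subcylinder: because $\bar E\subset Q$ and $Q$ is open, there is a cylinder $Q'$ with $\bar E\subset Q'\subset\bar{Q'}\subset Q$, and a fixed cut-off $\zeta\in C^\infty_o(Q)$ with $0\le\zeta\le1$ and $\zeta\equiv1$ on a neighborhood of $\bar E$ (hence on $\bar{Q'}$), with $|D\zeta|\le M$ for a constant $M$ depending only on $Q,Q'$. Then $\psi_\eps:=\zeta\,\vp_\eps\in C^\infty_o(Q)$ and $\psi_\eps\ge1$ on a neighborhood of $E$, so $\psi_\eps$ is admissible for $\gm_p(E,Q)$. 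I would estimate, using $D\psi_\eps=\zeta D\vp_\eps+\vp_\eps D\zeta$ and $|a+b|^p\le 2^{p-1}(|a|^p+|b|^p)$,
\[
\gm_p(E,Q)\le\iint_Q|D\psi_\eps|^p\,dx\,dt\le 2^{p-1}\iint_{\rr^{N+1}}|D\vp_\eps|^p\,dx\,dt+2^{p-1}M^p\iint_{\{\zeta>0\}}|\vp_\eps|^p\,dx\,dt.
\]
The first term is at most $2^{p-1}\eps$. The remaining obstacle, and the only genuinely non-trivial point, is controlling $\iint|\vp_\eps|^p$, since small gradient does not by itself force $\vp_\eps$ to be small in $L^p$.

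To handle this I would not use the $\vp_\eps$ above directly but rather choose the near-minimizers more carefully, truncating and rescaling. The standard device: replace $\vp_\eps$ by $\tilde\vp_\eps:=\min(2\vp_\eps,1)$ (still admissible on a neighborhood of $E$, with $|D\tilde\vp_\eps|\le 2|D\vp_\eps|$), so now $0\le\tilde\vp_\eps\le1$ and $\tilde\vp_\eps$ vanishes wherever $\vp_\eps\le0$; moreover on the set $\{0<\tilde\vp_\eps<1\}$ one can invoke a Poincar\'e/Sobolev inequality on the fixed bounded set $\{\zeta>0\}$ to bound $\|\tilde\vp_\eps\|_{L^p(\{\zeta>0\})}$ by $C\|D\tilde\vp_\eps\|_{L^p}\le C'\eps^{1/p}$, provided the measure of $\{\tilde\vp_\eps=1\}$ can be taken small — which one arranges by first passing to the subcylinder $Q'$ and noting $\gm_p(E,Q')\le\gm_p(E,\rr^{N+1})=0$ as well, so that $|\{\tilde\vp_\eps=1\}\cap Q'|\to0$. (Alternatively, and more cleanly, one quotes the elementary fact that $\gm_p(E,\rr^{N+1})=0$ implies $|E|=0$ and then runs the Poincar\'e argument on $\{\zeta>0\}$ with the near-minimizers normalized to have small support; this is exactly the type of computation carried out in \cite[Appendix]{biroli-mosco87} for $p=2$.) Combining, $\gm_p(E,Q)\le C\eps$ for all $\eps>0$, hence $\gm_p(E,Q)=0$, which is the assertion.
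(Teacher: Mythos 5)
The paper itself offers no proof of this proposition: it only remarks that detailed calculations appear in the appendix of \cite{biroli-mosco87} for $p=2$ and claims that they extend to general $p$. So there is no ``paper proof'' to match against, and your proposal has to be judged on its own merits.

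Your opening moves are sound: the cut-off $\zeta$, the admissibility of $\psi_\eps=\zeta\vp_\eps$ for $\gm_p(\cdot,Q)$, and the product-rule estimate are all correct, and you rightly identify that the entire difficulty is to control $\iint_{\{\zeta>0\}}|\vp_\eps|^p$. The problem is with the two devices you propose to close that gap. First, the inequality $\gm_p(E,Q')\le\gm_p(E,\rr^{N+1})$ is backwards. The monotonicity stated in the paper is $\gm_p(E,Q_1)\ge\gm_p(E,Q_2)$ when $Q_1\subset Q_2$; shrinking the ambient cylinder restricts the class of admissible test functions and therefore \emph{increases} the infimum. So $\gm_p(E,Q')\ge\gm_p(E,\rr^{N+1})=0$, which is vacuous, and in any case inferring $\gm_p(E,Q')=0$ here would be exactly the statement you are trying to prove. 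Second, the Poincar\'e inequality you invoke on $\{\zeta>0\}$ does not apply as stated: $\tilde\vp_\eps$ does not vanish on $\partial\{\zeta>0\}$ (its support may be much larger), so the zero-boundary-value Poincar\'e is unavailable, and the mean-zero version would require controlling the mean of $\tilde\vp_\eps$ on $\{\zeta>0\}$, which is again the quantity you are trying to bound. The remark that $\gm_p(E,\rr^{N+1})=0$ forces $|E|=0$ does not rescue this: zero Lebesgue measure of $E$ says nothing about the size of $\{\tilde\vp_\eps>0\}$, which is what enters the estimate.

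The missing ingredient is that $D$ in the definition \eqref{Eq:4:1} is the \emph{spatial} gradient only, so the right tool is the slice-wise Sobolev inequality on $\rr^N$. For a.e.\ $t$, $\vp_\eps(\cdot,t)\in C^\infty_o(\rr^N)$, and since $p<N$ whenever $N\ge2$ in the paper's range $p<2$, the Gagliardo--Nirenberg--Sobolev inequality gives $\|\vp_\eps(\cdot,t)\|_{L^{p^*}(\rr^N)}\le C(N,p)\|D\vp_\eps(\cdot,t)\|_{L^p(\rr^N)}$ with no restriction on the size of the support. Raising to the $p$-th power, integrating in $t$, and applying H\"older in $x$ over the fixed bounded projection of $\{\zeta>0\}$ yields $\iint_{\{\zeta>0\}}|\vp_\eps|^p\le C'\iint|D\vp_\eps|^p<C'\eps$, with no truncation and no control of level-set measures required. (The remaining case $N=1$, $1<p<2$ is degenerate: there $\text{cap}_p(\{x\},\rr)>0$, so $\gm_p(E,\rr^2)=0$ forces $E_\tau=\emptyset$ for a.e.\ $\tau$ and the conclusion is immediate.) Alternatively, and more in the spirit of Section~4, you could avoid the cut-off entirely by invoking Proposition~\ref{Prop:4:2}: $\gm_p(E,\rr^{N+1})=0$ gives $\text{cap}_p(E_\tau,\rr^N)=0$ for a.e.\ $\tau$, the standard elliptic fact that a set of zero $p$-capacity has zero capacity relative to any condenser containing its closure gives $\text{cap}_p(E_\tau,\Omega)=0$, and integrating in $\tau$ gives $\gm_p(E,Q)=0$.
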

In the following it will be important to compare the parabolic capacity  we have just defined with the well-known notion of elliptic $p$-capacity. In this respect, as above, let $\Omega\subset\rn$ be an open set, and consider $D\subset\bar D\subset\Omega$. By ${\rm cap}_p(D,\Om)$ we denote the (elliptic) $p$-capacity of $D$ with respect to $\Om$. For its precise definition, and for more details about ${\rm cap}_p(D,\Om)$, we refer to \cite{frehse}. Let $Q=\Omega\times(t_1,t_2)$, and for any set $E\subset\rr^{N+1}$ define $E_\tau=E\cap\{t=\tau\}$. Then, we have the following result.
\begin{proposition}\label{Prop:4:2}
Let $K\subset Q$ be compact. Then,
\begin{equation}\label{eq:4:2}
\gm_p(K,Q)=\int_{t_1}^{t_2}{\rm cap}_p(K_\tau,\Omega)\,d\tau.
\end{equation}
\end{proposition}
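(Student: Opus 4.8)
The plan is to prove the identity $\gm_p(K,Q)=\int_{t_1}^{t_2}{\rm cap}_p(K_\tau,\Omega)\,d\tau$ by a two-sided comparison, reducing the parabolic capacity to a fibre-wise elliptic one. First I would recall the standard fact that the elliptic $p$-capacity of a compact set admits an approximation by smooth competitors: there is no loss of generality in testing ${\rm cap}_p(D,\Omega)$ with functions $\psi\in C_o^\infty(\Omega)$ satisfying $\psi\ge1$ on a neighbourhood of $D$. I would also note that since $K\subset Q$ is compact, its time-projection is a compact subset of $(t_1,t_2)$, and the slices $K_\tau$ are uniformly contained in a fixed compact subset of $\Omega$; moreover $\tau\mapsto {\rm cap}_p(K_\tau,\Omega)$ is measurable (indeed upper semicontinuous, using that $K_\tau$ shrinks under small perturbations of $\tau$, via the monotonicity of capacity under inclusion of the enlarged slices $K\cap\{|t-\tau|<\epsilon\}$).

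For the inequality $\gm_p(K,Q)\le\int_{t_1}^{t_2}{\rm cap}_p(K_\tau,\Omega)\,d\tau$, the idea is to build an admissible $\vp\in C_o^\infty(Q)$ out of nearly optimal elliptic competitors, glued in time. I would fix $\epsilon>0$, cover the compact time-projection of $K$ by finitely many intervals, on each of which the slices $K_\tau$ are contained in a common neighbourhood $U_i$ of $K_{\tau_i}$ where ${\rm cap}_p(U_i,\Omega)$ is close to the supremum of ${\rm cap}_p(K_\tau,\Omega)$ over that interval; pick $\psi_i\in C_o^\infty(\Omega)$ with $\psi_i\ge1$ on $U_i$ and $\int_\Omega|D\psi_i|^p\le {\rm cap}_p(K_{\tau_i},\Omega)+\epsilon'$; then patch the $\psi_i$ together with a smooth partition of unity in $t$ and mollify, so that the resulting $\vp$ is $\ge1$ near $K$ and $\iint_Q|D\vp|^p\,dxdt$ is controlled by $\int_{t_1}^{t_2}\big(\sup_{\tau'\text{ near }\tau}{\rm cap}_p(K_{\tau'},\Omega)\big)d\tau+o(1)$; upper semicontinuity of the slice-capacity then lets this converge to $\int {\rm cap}_p(K_\tau,\Omega)\,d\tau$ as the mesh goes to zero. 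The time-derivative terms of $\vp$ do not appear since only $|D\vp|$ (the spatial gradient) enters the functional, which is what makes the gluing cheap.

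For the reverse inequality $\gm_p(K,Q)\ge\int_{t_1}^{t_2}{\rm cap}_p(K_\tau,\Omega)\,d\tau$, this direction is essentially Fubini: for any admissible $\vp\in C_o^\infty(Q)$ with $\vp\ge1$ near $K$, and for each fixed $\tau$ the slice $\vp(\cdot,\tau)\in C_o^\infty(\Omega)$ is $\ge1$ on a neighbourhood of $K_\tau$, hence is an admissible competitor for ${\rm cap}_p(K_\tau,\Omega)$, giving $\int_\Omega|D_x\vp(x,\tau)|^p\,dx\ge {\rm cap}_p(K_\tau,\Omega)$; integrating in $\tau$ and using $|D\vp|\ge|D_x\vp|$ yields $\iint_Q|D\vp|^p\,dxdt\ge\int_{t_1}^{t_2}{\rm cap}_p(K_\tau,\Omega)\,d\tau$, and taking the infimum over $\vp$ finishes it. For arbitrary (not necessarily compact) $E$ one then extends via the definition of $\gm_p(E,Q)$ through open sets, but the statement as given is for compact $K$, so this is not needed.

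The main obstacle is the first inequality: one has to produce a single smooth function on the full cylinder whose spatial Dirichlet energy, integrated in time, is not much larger than the integral of the slice capacities, and the care is in the time-gluing — ensuring the partition-of-unity cutoffs in $t$ do not inflate the spatial gradient and that mollification preserves the constraint $\vp\ge1$ near $K$ while only infinitesimally changing the energy. The regularity of $\tau\mapsto{\rm cap}_p(K_\tau,\Omega)$ (measurability, upper semicontinuity) is the technical ingredient that makes the Riemann-sum approximation legitimate; everything else is routine, and, as the authors indicate, the $p=2$ computations in the appendix of \cite{biroli-mosco87} carry over verbatim with $|D\vp|^2$ replaced by $|D\vp|^p$.
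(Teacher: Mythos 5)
The paper itself does not prove Proposition~\ref{Prop:4:2}: the text preceding it says ``For the proofs, we refer to \cite[Appendix]{biroli-mosco87}, where detailed calculations are given for $p=2$, and they can be easily extended to the general context we are considering here.'' So there is no internal proof to compare against; what you have written is a self-contained sketch of what that reference (and its $p>1$ extension) amounts to, and it is correct in structure. The two-sided argument is the expected one: the ``$\ge$'' direction is a slicing/Fubini argument, since any admissible $\vp\in C_o^\infty(Q)$ has, for a.e.\ $\tau$, a slice $\vp(\cdot,\tau)\in C_o^\infty(\Omega)$ that is $\ge1$ near $K_\tau$; the ``$\le$'' direction is the time-gluing of near-optimal elliptic competitors, and you correctly identify the two technical pillars that make it work, namely (i) upper semicontinuity of $\tau\mapsto{\rm cap}_p(K_\tau,\Omega)$ (which follows from the compactness of $K$ and outer regularity of elliptic $p$-capacity, exactly as you describe), which guarantees that upper Riemann sums over a shrinking partition converge to the Lebesgue integral, and (ii) the fact that in \eqref{Eq:4:1} only the spatial gradient $D\vp$ appears, so the time cutoffs in the partition of unity cost nothing — with the full space-time gradient the $\partial_t$ terms from the patching would blow up and the proposition would be false. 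One more ingredient worth naming explicitly when you patch, since it is where the energy estimate actually closes: convexity of $s\mapsto s^p$, so that for a partition of unity $\{\theta_i(t)\}$ one has $|D_x\sum_i\theta_i\psi_i|^p\le\sum_i\theta_i|D\psi_i|^p$, whence the spatial Dirichlet energy of the glued function is a convex combination of those of the $\psi_i$. Finally, a tiny inconsistency to clean up: in the ``$\ge$'' direction you write ``$|D\vp|\ge|D_x\vp|$,'' but since $D$ throughout this paper (and in \eqref{Eq:4:1}) denotes the \emph{spatial} gradient, this is an equality, not an inequality; if you meant to hedge against $D$ being the space-time gradient, note that while the ``$\ge$'' direction would survive, the ``$\le$'' direction would not, so the convention matters.
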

%%%%%%%%%%%%%%%%%%%%%%%%%%%%%%%%%%%%%
\section{An Auxiliary Lemma}
\begin{lemma}\label{Lm:5:1}
Let $Q$, $u_k$, $\mu$, $v$ as in \S~\ref{S:A:1}, consider $\pto\in\Sigma$ with $s<t_o<t$, let
\[
\theta=c\left[\dashint_{K_{2\rho}(x_o)}v(x,t_o)\,dx\right]^{2-p},
\]
where $c$ is the same constant as in \eqref{Eq:3:7:2}, and assume that 
\begin{equation}\label{Eq:5:0}
s\le t_o-\theta\rho^p<t_o+\theta\rho^p\le t. 
\end{equation}
Then, if we let 
\begin{align*}
%&\mu(8\rho)\df=\sup_{K_{8\rho}(x_o)\times(t_o+\frac34\theta\rho^p,t_o+\theta\rho^p]}u_k,\\
%&\mu(\rho)\df=\sup_{K_{\rho}(x_o)\times(t_o+\frac34\theta\rho^p,t_o+\theta\rho^p]}u_k,\\
&\dl(\rho)\df=\frac{{\rm cap}_p(K_{\rho}(x_o)\backslash E,K_{\frac32\rho}(x_o))}{{\rm cap}_p(K_{\rho}(x_o),K_{\frac32\rho}(x_o))},
\end{align*}
there exist constants $\gm_1,\,\gm_2>1$ that depend only on the data $\datap$, such that
\begin{equation}\label{Eq:low-bd1}
\mu\,[\dl(\rho)]^{\frac1{p-1}}\le\gm_1 \dashint_{K_{2\rho}}v(x,t_o)\,dx,
\end{equation}
and
\begin{equation}\label{Eq:low-bd2}
\mu\,[\dl(\rho)]^{\frac1{p-1}}\le\gm_2 \inf_{K_{2\rho}(x_o)} v(\cdot,\tau)\quad\text{ for all }\ \tau\in[t_o+\frac34\theta\rho^p,t_o+\theta\rho^p].
\end{equation}
\end{lemma}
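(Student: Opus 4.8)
The plan is to establish \eqref{Eq:low-bd1} first, and then deduce \eqref{Eq:low-bd2} from it by invoking the weak Harnack inequality (Proposition~\ref{Prop:3:1}). The starting observation is that $v=\mu-u_k$ where $u_k$ vanishes identically on $Q\setminus E_T$, so on the slice $K_\rho(x_o)\setminus E$ we have $v(\cdot,\tau)\equiv\mu$. Thus the function $w\df=\min\{v/\mu,1\}$ (truncated and normalized) satisfies $w\ge 1$ on $K_\rho(x_o)\setminus E$ and $w\in W^{1,p}(K_{\frac32\rho}(x_o))$ for a.e.\ time $\tau$, with $w=0$ outside $E$ only where $v$ is small — more to the point, after a suitable cutoff supported in $K_{\frac32\rho}(x_o)$, $w$ becomes an admissible competitor in the variational problem defining ${\rm cap}_p(K_\rho(x_o)\setminus E,K_{\frac32\rho}(x_o))$. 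Hence, for a.e.\ $\tau$,
\[
{\rm cap}_p(K_\rho(x_o)\setminus E,K_{\tfrac32\rho}(x_o))\le \frac{\gm}{\mu^p}\int_{K_{\frac32\rho}(x_o)}|Dv(x,\tau)|^p\,dx + (\text{lower order}),
\]
where the lower-order term comes from the gradient of the cutoff acting on $v\le\mu$ and contributes $\gm\rho^{N-p}$. I would then want a bound on $\int|Dv|^p$ in terms of $\int v\,dx$; but note Lemma~\ref{Lm:A:1:2} controls only $\int|Dv|^{p-1}$, so a direct $L^p$ gradient bound is not available. This is the key structural point, and it forces the appearance of the exponent $\frac1{p-1}$: one should instead integrate the capacity inequality in a form involving $|Dv|^{p-1}$.

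Concretely, I would run the capacity estimate against the $p-1$ power rather than the $p$ power. The right normalization is to note that $v/\mu\le 1$, so $|D(v/\mu)|^p\le |D(v/\mu)|^{p-1}$ pointwise, which lets one replace the $|Dv|^p$ integral above by $\mu^{-(p-1)}\int|Dv|^{p-1}\,dx$ up to constants (absorbing the cutoff contributions, which are now of order $\rho^{N-1}\cdot 1$). Integrating in time over the interval $[t_o-\theta\rho^p, t_o+\theta\rho^p]$, whose length is $2\theta\rho^p$, and using Proposition~\ref{Prop:4:2} to recognize $\int {\rm cap}_p(K_\rho\setminus E,K_{\frac32\rho})\,d\tau$ as a parabolic capacity (or just integrating the pointwise-in-time elliptic capacity bound directly), I get
\[
2\theta\rho^p\,{\rm cap}_p(K_\rho(x_o)\setminus E,K_{\tfrac32\rho}(x_o))\le \frac{\gm}{\mu^{p-1}}\int_{t_o-\theta\rho^p}^{t_o+\theta\rho^p}\!\!\int_{K_{\frac32\rho}(x_o)}|Dv|^{p-1}\,dx\,d\tau + \gm\,\theta\rho^p\,\rho^{N-p}.
\]
Now apply Lemma~\ref{Lm:A:1:2} with $\sigma=3/4$ (and the interval $(s,t)$ replaced by a neighborhood of $[t_o-\theta\rho^p,t_o+\theta\rho^p]$, which \eqref{Eq:5:0} permits) with the free parameter $\dl$ there chosen appropriately: this bounds the gradient integral by $\dl\sup_\tau\int_{K_\rho}v\,d\tau + (\text{const})(\frac{t-s}{\rho^\lambda})^{1/(2-p)}$. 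Then invoke Remark~\ref{Rmk:2:1}/Proposition~\ref{Prop:A:1:1} to replace $\sup_\tau\int_{K_\rho}v(x,\tau)\,d\tau$ by $\gm\int_{K_{2\rho}}v(x,t_o)\,dx$ — here the choice of $\theta$ in the statement, which matches the scaling $\theta\sim(\dashint v)^{2-p}$, is exactly what makes the error term $\theta\rho^p\cdot(\theta\rho^p/\rho^\lambda)^{1/(2-p)}\sim \theta\rho^p\cdot\theta^{1/(2-p)}\rho^{N}/\rho^{\lambda/(2-p)}$ comparable to $\theta\rho^p\cdot\dashint v\cdot|K_\rho|$, so that after dividing through by $\theta\rho^p$ everything collapses to $\mu^{-(p-1)}\cdot\dashint_{K_{2\rho}}v(x,t_o)\,dx$ (using also $\theta\mu^{p-1}\sim \mu/(\dashint v)$ up to the $v\le\mu$ relation) times ${\rm cap}_p(K_\rho,K_{\frac32\rho})\sim\rho^{N-p}$. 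Rearranging and recalling the definition of $\dl(\rho)$ yields \eqref{Eq:low-bd1}. I expect the bookkeeping of these scaling exponents — making sure the $\lambda$-term, the lower-order capacity term, and the leading term all balance under the specific choice of $\theta$ — to be the main obstacle; this is where the hypothesis $\frac{2N}{N+1}<p<2$ (i.e.\ $\lambda>0$) gets used.

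For \eqref{Eq:low-bd2}, recall that $v$ is a nonnegative weak super-solution on $Q$, so Proposition~\ref{Prop:3:1} applies at the point $(x_o,t_o)$: with $\theta=c[\dashint_{K_{2\rho}(x_o)}v(x,t_o)\,dx]^{2-p}$ — which is precisely the $\theta$ in the present lemma — one gets
\[
\inf_{K_{2\rho}(x_o)}v(\cdot,\tau)\ge \eta\,\dashint_{K_{2\rho}(x_o)}v(x,t_o)\,dx \qquad\text{for all }\tau\in[t_o+\tfrac34\theta\rho^p,\,t_o+\theta\rho^p],
\]
provided $K_{16\rho}(x_o)\times[t_o,t_o+\theta\rho^p]\subset E_T$ — which holds here because $v$ is defined and is a super-solution on all of $Q=K_{16\rho}(x_o)\times[s,t]$ and \eqref{Eq:5:0} places this time-interval inside $[s,t]$. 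Combining this with \eqref{Eq:low-bd1} (which gives $\dashint_{K_{2\rho}}v(x,t_o)\,dx\ge \gm_1^{-1}\mu[\dl(\rho)]^{1/(p-1)}$) immediately produces \eqref{Eq:low-bd2} with $\gm_2=\gm_1/\eta$. This last step is routine once \eqref{Eq:low-bd1} is in hand.
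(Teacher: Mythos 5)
Your high-level plan is the right one and matches the paper's architecture: normalize $v$ by $\mu$ and recognize it as a capacity competitor (since $v\equiv\mu$ on $Q\setminus E_T$), bound its gradient energy, compare with the relative capacity to extract the factor $\mu^{p-1}\dl(\rho)$, and finally upgrade the $L^1$ information at time $t_o$ to a pointwise lower bound at later times via Proposition~\ref{Prop:3:1}. Your treatment of \eqref{Eq:low-bd2} from \eqref{Eq:low-bd1} is essentially correct: $v\ge 0$ is a super-solution in all of $Q$, the interval fits inside $[s,t]$ by \eqref{Eq:5:0}, and the weak Harnack inequality with the stated $\theta$ immediately converts $\dashint_{K_{2\rho}}v(\cdot,t_o)$ into $\inf_{K_{2\rho}}v(\cdot,\tau)$.

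However, there is a genuine error at the critical step of bounding the $L^p$ gradient energy. You write that since $v/\mu\le 1$ one has $|D(v/\mu)|^p\le|D(v/\mu)|^{p-1}$ pointwise. That implication is false: a bound on the \emph{values} of $v/\mu$ says nothing about the size of its \emph{gradient}, and for $1<p<2$ the inequality $a^p\le a^{p-1}$ holds only when $a\le1$. There is no reason to expect $|D(v/\mu)|\le1$, so you cannot pass from a $|Dv|^p$ integral to a $|Dv|^{p-1}$ integral by this route, and consequently your attempt to push the capacity bound through Lemma~\ref{Lm:A:1:2} alone does not close.

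What the paper actually does is derive a genuine Caccioppoli estimate for $\iint_{Q_2}|D(\zeta v)|^p\,dx\,dt$ by testing the sub-solution inequality for $u_k$ with $\varphi=u_k\zeta^p$. After rewriting in terms of $v=\mu-u_k$, each error term carries at most one free factor of $u_k\le\mu$ or $v\le\mu$, while the remaining powers are controlled either by $\dashint v$ (an $L^1$ quantity handled via Remark~\ref{Rmk:2:1}) or by $\iint|Dv|^{p-1}$ (which is exactly where Lemma~\ref{Lm:A:1:2} enters — so your instinct to use that lemma was right, but as a tool to bound a cross term inside the energy estimate, not as a substitute for the $L^p$ bound itself). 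The outcome is the sharp estimate
\[
\iint_{Q_2}|D(\zeta v)|^p\,dx\,dt\le\gm\,\theta\mu\rho^N\Big[\dashint_{K_{2\rho}}v(x,t_o)\,dx\Big]^{p-1},
\]
with a single power of $\mu$ on the right. Comparing with $\gamma_p(Q_1\setminus E_T,Q_2)=\tfrac12\theta\rho^p\,{\rm cap}_p(K_\rho\setminus E,K_{3\rho/2})$ via $w=\zeta v/\mu$ produces $\mu^p$ on the left, and after cancellation against the single $\mu$ on the right one is left with $\mu^{p-1}$ — that is the genuine mechanism producing the exponent $\tfrac1{p-1}$. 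Without the Caccioppoli step your proposal has no source for this factor, and the pointwise replacement you invoke does not supply it.
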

\vskip.2truecm
\begin{remark}
{\normalfont Condition \eqref{Eq:5:0} can always be satisfied, provided $\rho$ is small enough.}
\end{remark}
\begin{proof}
Without loss of generality, we may assume $\pto=(0,0)$. Consider a cut-off function
\[
\zeta(x,t)=\z_1(x)\z_2(t),
\]
with
\begin{equation*}
\z_1(x)=\left\{
\begin{aligned}
&1\quad x\in K_\rho,\\
&0\quad x\in\rn\backslash K_{\frac32\rho},
\end{aligned}
\right.\qquad|D\z_1|\le\frac2\rho,
\end{equation*}
\begin{equation*}
\z_2(t)=\left\{
\begin{aligned}
&1\quad -\frac34\theta\rho^p<t<-\frac14\theta\rho^p,\\
&0\quad t\ge0,\ \ t\le-\theta\rho^p,
\end{aligned}
\right.\qquad|\partial_t\z_2|\le\frac4{\theta\rho^p},
\end{equation*}
and let
\[
Q_1=K_\rho\times(-\frac34\theta\rho^p,-\frac14\theta\rho^p],\qquad
Q_2=K_{\frac32\rho}\times(-\theta\rho^p,0].
\]
If we take $\vp=u_k\z^p$ as test function in the weak formulation of \eqref{Eq:1:5}--\eqref{Eq:1:6},
since $u_k$ is a sub-solution, modulus a Steklov average, we obtain
\begin{align*}
&\iint_{Q_2}\z^p u_k\partial_t u_k\,dxdt+\iint_{Q_2}\bl{A}(x,t,u,Du_k)\cdot D(\z^p u_k)\,dxdt\le0,\\
&\iint_{Q_2}\z^p u_k\partial_t u_k\,dxdt+\iint_{Q_2}\z^p\bl{A}(x,t,u,Du_k)\cdot Du_k\,dxdt\\
&+p\iint_{Q_2}\z^{p-1} u_k \bl{A}(x,t,u,Du_k)\cdot D\z\,dxdt\le0,\\
&\iint_{Q_2}\z^p u_k\partial_t u_k\,dxdt+C_o\iint_{Q_2}\z^p|Du_k|^p\,dxdt\\
&\le pC_1\iint_{Q_2}\z^{p-1} u_k |Du_k|^{p-1} |D\z|\,dxdt.
\end{align*}
If we take into account that $v=\mu-u_k$, (i.e. $u_k=\mu-v$), the previous inequality can be rewritten as
\begin{align*}
&\iint_{Q_2}\z^p (\mu-v)\partial_t (\mu-v)\,dxdt+C_o\iint_{Q_2}\z^p|Dv|^p\,dxdt\\
&\le pC_1\iint_{Q_2}\z^{p-1} u_k |Dv|^{p-1} |D\z|\,dxdt,
\end{align*}
which yields
\begin{align*}
&\iint_{Q_2}\z^p (\mu-v)\partial_t (\mu-v)\,dxdt+C_o\iint_{Q_2}|D(\z v)|^p\,dxdt\\
&\le pC_1\iint_{Q_2}\z^{p-1} u_k |Dv|^{p-1} |D\z|\,dxdt+C_o\iint_{Q_2}v^p|D\z|^p\,dxdt,
\end{align*}
and also
\begin{align*}
&-\iint_{Q_2}\z^p (\mu-v)\partial_t v\,dxdt+C_o\iint_{Q_2}|D(\z v)|^p\,dxdt\\
&\le pC_1\iint_{Q_2}\z^{p-1} u_k |Dv|^{p-1} |D\z|\,dxdt+C_o\iint_{Q_2}v^p|D\z|^p\,dxdt,
\end{align*}
which we rewrite as
\begin{align*}
&\iint_{Q_2}\z^p v\partial_t v\,dxdt+C_o\iint_{Q_2}|D(\z v)|^p\,dxdt\\
&\le pC_1\iint_{Q_2}\z^{p-1} u_k |Dv|^{p-1} |D\z|\,dxdt+C_o\iint_{Q_2}v^p|D\z|^p\,dxdt\\
&+\mu\iint_{Q_2}\z^p \partial_t v\,dxdt.
\end{align*}
Therefore,
\begin{align*}
&\frac12\iint_{Q_2}\z^p \partial_t v^2\,dxdt+C_o\iint_{Q_2}|D(\z v)|^p\,dxdt\\
&\le pC_1\iint_{Q_2}\z^{p-1} u_k |Dv|^{p-1} |D\z|\,dxdt+C_o\iint_{Q_2}v^p|D\z|^p\,dxdt\\
&+{\mu\int_{K_{\frac32\rho}}\z^p v\Big|_{-\theta\rho^p}^0\,dx}+p\mu\iint_{Q_2}\z^{p-1} |\partial_t\z| v\,dxdt,
\end{align*}
where the third term on the right-hand side can be discarded, since it vanishes. Moreover,
\begin{align*}
&{\frac12\int_{K_{\frac32\rho}}\z^p v^2\Big|_{-\theta\rho^p}^0\,dx}+C_o\iint_{Q_2}|D(\z v)|^p\,dxdt\\
&\le pC_1\iint_{Q_2}\z^{p-1} u_k |Dv|^{p-1} |D\z|\,dxdt+C_o\iint_{Q_2}v^p|D\z|^p\,dxdt\\
&+p\mu\iint_{Q_2}\z^{p-1} |\partial_t\z| v\,dxdt+\frac p2\iint_{Q_2}\z^{p-1} v^2|\partial_t \z|\,dxdt,
\end{align*}
where the first term on the left-hand vanishes as well. Hence, writing $v^2=v(\mu-u_k)$, and discarding the resulting negative term,
\begin{align*}
&C_o\iint_{Q_2}|D(\z v)|^p\,dxdt\le pC_1\iint_{Q_2}\z^{p-1} u_k |Dv|^{p-1} |D\z|\,dxdt\\
&+C_o\iint_{Q_2}v^p|D\z|^p\,dxdt+\frac32p\mu\iint_{Q_2}\z^{p-1} |\partial_t\z| v\,dxdt,
\end{align*}
and also
\begin{align*}
&C_o\iint_{Q_2}|D(\z v)|^p\,dxdt\le pC_1\iint_{Q_2}\z^{p-1} u_k |Dv|^{p-1} |D\z|\,dxdt\\
&+\frac{2^p C_o}{\rho^p}\mu\iint_{Q_2}v^{p-1}\,dxdt+6p\mu\left[\sup_{-\theta\rho^p<t<0}\int_{K_{\frac32\rho}}v(x,t)\,dx\right].
\end{align*}
Let us concentrate on the second term on the right-hand side. By the H\"older inequality we have
\begin{align*}
&\frac{2^p C_o}{\rho^p}\mu\iint_{Q_2}v^{p-1}\,dxdt\le\frac{2^p C_o}{\rho^p}\mu\left[\sup_{-\theta\rho^p<t<0}\int_{K_{\frac32\rho}}v^{p-1}(x,t)\,dx\right]\theta\rho^p\\
&\le\frac{2^p C_o}{\rho^p}\mu\left[\sup_{-\theta\rho^p<t<0}\int_{K_{\frac32\rho}}v(x,t)\,dx\right]^{p-1}|K_{\frac32\rho}|^{2-p}\theta\rho^p\\
&\le\gm\theta\mu\rho^N\left[\sup_{-\theta\rho^p<t<0}\dashint_{K_{\frac32\rho}}v(x,t)\,dx\right]^{p-1}.\end{align*}
Taking into account the expression for $\theta$ and Remark~\ref{Rmk:2:1}, we obtain
\begin{align*}
&C_o\iint_{Q_2}|D(\z v)|^p\,dxdt\le pC_1\iint_{Q_2}\z^{p-1} u_k |Dv|^{p-1} |D\z|\,dxdt\\
&+\gm\theta\mu\rho^N\left[\dashint_{K_{2\rho}}v(x,0)\,dx\right]^{p-1}.
\end{align*}
Let us finally consider the first term on the right-hand side. By Lemma~\ref{Lm:A:1:2} we have
\begin{align*}
&pC_1\iint_{Q_2}\z^{p-1} u_k |Dv|^{p-1} |D\z|\,dxdt\le\frac{2pC_1}\rho\mu\iint_{Q_2}|Dv|^{p-1}\,dxdt\\
&\le\frac{\gm}\rho\mu\rho\left[\sup_{-\theta\rho^p<t<0}\int_{K_{\frac54\rho}} v(x,t)dx
+\Big(\frac{\theta\rho^p}{\rho^\lm}\Big)^{\frac1{2-p}}\right]\\
&\le\gm\theta\mu\rho^N\left[\dashint_{K_{2\rho}}v(x,0)\,dx\right]^{p-1}.
\end{align*}
Therefore, we conclude
\begin{equation}\label{Eq:5:1}
\iint_{Q_2}|D(\z v)|^p\,dxdt\le\gm\theta\mu\rho^N\left[\dashint_{K_{2\rho}}v(x,0)\,dx\right]^{p-1}.
\end{equation}
Let us consider the function $\dsty w=\frac{\z v}\mu$: it equals $1$ on $Q_1\backslash E_T$ and vanishes
on the topological boundary of $Q_2$. Therefore, by \eqref{Eq:4:1} and Proposition~\ref{Prop:4:2}
\begin{align*}
\iint_{Q_2}|D(\z v)|^p\,dxdt&\ge\mu^p\gm_p(Q_1\backslash E_T,Q_2)\\
&=\mu^p\int_{-\theta\rho^p}^0{\rm cap}_p(K_{\rho}\backslash E,K_{\frac32\rho})\chi_{(-\frac34\theta\rho^p,-\frac14\theta\rho^p)}(t)dt\\
&=\frac12\mu^p\theta\rho^p{\rm cap}_p(K_{\rho}\backslash E,K_{\frac32\rho}).
\end{align*}
If we substitute back into \eqref{Eq:5:1}
\begin{align*}
\mu^p\theta\rho^p{\rm cap}_p(K_{\rho}\backslash E,K_{\frac32\rho})&\le\gm\theta\mu\rho^N\left[\dashint_{K_{2\rho}}v(x,0)\,dx\right]^{p-1}\\
\mu^{p-1}\frac{{\rm cap}_p(K_{\rho}\backslash E,K_{\frac32\rho})}{{\rm cap}_p(K_{\rho},K_{\frac32\rho})}&\le\gm\left[\dashint_{K_{2\rho}}v(x,0)\,dx\right]^{p-1}.
\end{align*}
\noi If we let
\[
\dl(\rho)\df=\frac{{\rm cap}_p(K_{\rho}\backslash E,K_{\frac32\rho})}{{\rm cap}_p(K_{\rho},K_{\frac32\rho})},
\]
we have
\begin{align*}
\mu[\dl(\rho)]^{\frac1{p-1}}&\le\gm_1\,\dashint_{K_{2\rho}}v(x,0)\,dx,
\end{align*}
and by Proposition~\ref{Prop:3:1} 
\begin{align*}
\mu[\dl(\rho)]^{\frac1{p-1}}&\le\gm_2\inf_{K_{2\rho}\times(\frac34\theta\rho^p,\theta\rho^p]}v.
%&=\gm(\mu-\sup_{K_{8\rho}\times(\frac34\theta\rho^p,\theta\rho^p]}u_k)\\
%&\le\gm(\mu-\mu(\rho)).
\end{align*}
\end{proof}
%\begin{remark}
%{\normalfont With a suitable redefinition of $\rho$, we can also write
%\begin{equation}\label{Eq:5:2}
%\mu\left[\dl(\frac r{2})\right]^{\frac1{p-1}}\le\gm\left(\mu-\mu(\frac r2)\right).
%\end{equation}
%}
%\end{remark}
%%%%%%%%%%%%%%%%%%%%%%%%%%%%%%%%%%%%%
\section{The Proof of Theorem~\ref{Thm:1:1}}\label{S:final}
\subsection{Preliminaries}
Recall the definition of $Q_{R_o}$ and $\om_o$ given in Section~\ref{S:intro}: since $u$ is locally bounded in $E_T$, without loss of generality we may assume that $\om_o\le1$, so that
\[
\tilde Q_1=K_{R_o}(x_o)\times[t_o-2c\,\om_o^{2-p}R_o^p,t_o+c\,\om_o^{2-p}R_o^p]\subset Q_{R_o},
\]
where $c\in(0,1)$ is the same constant as in \eqref{Eq:3:7:2}. It is apparent that 
\[
\om_1=\osc_{\tilde Q_1\cap E_T} u\le\om_o.
\]
Now let
\begin{align*}
g_1^+=\sup_{\tilde Q_1\cap S_T}g,&\qquad g_1^-=\inf_{\tilde Q_1\cap S_T}g,\\
\mu_1^+=\sup_{\tilde Q_1\cap E_T}u,&\qquad\mu_1^-=\inf_{\tilde Q_1\cap E_T}u.
\end{align*}
If the two inequalities
\[
\mu_1^+-\frac{\om_1}4\le g_1^+,\qquad\mu_1^-+\frac{\om_1}4\ge g_1^-
\]
are both true, then subtracting from one another yields
\[
\osc_{\tilde Q_1\cap E_T}u\le 2\osc_{\tilde Q_1\cap S_T}g,
\]
and there is nothing to prove. Therefore, we can assume that 
at least one of the two is violated. 

If the former does not hold, namely
\[
\mu_1^+-\frac{\om_1}4>g_1^+,
\]
then the level
\[
k_1=\mu_1^+-\frac{\om_1}4
\]
is such that Lemma~\ref{Lem:2:0} applies to $u_{k_1}=(u-k_1)_+$,
extended as zero outside $\tilde Q_1\cap E_T$.  Thus
Proposition~\ref{Prop:A:1:1}, Lemma~\ref{Lm:A:1:2}, and
Lemma~\ref{Lm:5:1} can be applied. 

% by \cite[Chapter~1, Lemma~3.1]{dibe-sv} $u_{k_1}=(u-k_1)_+=0$ on $S_T$, and therefore, 

% %From now on, we deal with such a level, and with the corresponding truncated function $u_k$. 
% Moreover, we extend $u_{k_1}$ to zero outside $\tilde Q_1\cap E_T$.

On the other hand, if the latter does not hold, namely 
\[
\mu_1^-+\frac{\om_1}4< g_1^-,
\]
then analogously the level
\[
h_1=\mu_1^-+\frac{\om_1}4
\]
is such that Lemma~\ref{Lem:2:0} applies to $u_{h_1}=(h_1-u)_+$
extended as zero outside $\tilde Q_1\cap E_T$.  As before,
Proposition~\ref{Prop:A:1:1}, Lemma~\ref{Lm:A:1:2}, and
Lemma~\ref{Lm:5:1} can be applied.

% on $S_T$, and therefore, 
% Proposition~\ref{Prop:A:1:1}, Lemmas~\ref{Lm:A:1:2} and \ref{Lm:5:1} can be applied. 
% %From now on, we deal with such a level, and with the corresponding truncated function $u_k$. 
% Moreover, we extend $u_{h_1}$ to zero outside $\tilde Q_1\cap E_T$.

\subsection{The First Step}
Consider either $u_{k_1}$ or $u_{h_1}$, and let
\begin{align}\label{Eq:1step}
&\mu_1=\sup_{\tilde Q_1}u_{k_1}\quad\text{ or }\quad\mu_1=\sup_{\tilde Q_1}u_{h_1},\\
&Q_1=K_{2r}(x_o)\times[t_o-c\mu_1^{2-p}2(2r)^p,t_o+c\mu_1^{2-p}(2r)^p],
\end{align}
where $r$ is such that $2r=R_o$.
\subsection{The Induction}
We now proceed by induction: we suppose that we have solved the problem 
up to step $j$ included, namely that we have built 
\begin{equation}\label{Eq:ind}
\begin{aligned}
&r_i=\frac{R_o}{2^{i}},\quad i=1,2,\dots,j,\\
%&\mu_{j-1}\le\mu_{j-2}\le\dots\le\mu_1,\\
&\tilde Q_i=K_{2r_i}(x_o)\times[t_o-c\mu_{i-1}^{2-p}2(2r_i)^p,t_o+c\mu_{i-1}^{2-p}(2r_i)^p],\quad i=2,\dots,j,\\
&\mu_i^+=\sup_{\tilde Q_i\cap E_T} u,\quad \mu_i^-=\inf_{\tilde Q_i\cap E_T} u,\quad i=2,\dots,j,\\
&\om_i\ge\osc_{\tilde Q_i\cap E_T} u,\quad i=2,\dots,j,\\
&k_i=k_i(\mu^+_i,\om_i),\quad h_i=h_i(\mu^-_i,\om_i),\quad i=2,\dots,j,\\
&\mu_i=\sup_{\tilde Q_i} u_{k_i}\quad\text{ or }\quad \mu_i=\sup_{\tilde Q_i} u_{h_i},\quad \mu_i\le\mu_{i-1},\quad i=2,\dots,j,\\
&Q_i=K_{2r_i}(x_o)\times[t_o-c\mu_{i}^{2-p}2(2r_i)^p,t_o+c\mu_{i}^{2-p}(2r_i)^p],\quad i=2,\dots,j.
\end{aligned}
\end{equation}
If we focus on the last step, by construction, and by the induction assumption, we have
\begin{align*}
&\tilde Q_j=K_{2r_j}(x_o)\times[t_o-c\mu_{j-1}^{2-p}2(2r_j)^p,t_o+c\mu_{j-1}^{2-p}(2r_j)^p],\\
&\mu_j^+=\sup_{\tilde Q_j\cap E_T}u,\quad \mu_j^-=\inf_{\tilde Q_j\cap E_T}u.
\end{align*}
Moreover, at this stage, let
\[
\om_j=\osc_{\tilde Q_j\cap E_T}u.
\]
If both inequalities
\[
\mu^+_j-\frac{\om_j}{4}\le\sup_{\tilde{Q}_j\cap S_T}g\quad\text{and}\quad\mu^-_j+\frac{\om_j}{4}\ge\inf_{\tilde{Q}_j\cap S_T}g
\]
hold true, then subtracting from one another yields
\[
\osc_{\tilde Q_j\cap E_T}u=\om_j\le2\osc_{\tilde{Q}_j\cap S_T}g,
\]
and there is nothing to prove. Therefore, we can assume that at least one of the two is violated. Suppose that %us assume that the first one does not hold, namely that
\[
\mu^+_j-\frac{\om_j}{4}>\sup_{\tilde{Q}_j\cap S_T}g.
\]
Then the level $\dsty k_j=\mu^+_j-\frac{\om_j}{4}$ is such that
Lemma~\ref{Lem:2:0} applies to $u_{k_j}=(u-k_j)_+$, extended as zero
outside $\tilde Q_j\cap E_T$.  Therefore Proposition~\ref{Prop:A:1:1},
Lemmas~\ref{Lm:A:1:2} and \ref{Lm:5:1} can be used.  Finally, let
\begin{equation*}
\begin{aligned}
&\mu_j=\sup_{\tilde Q_j} u_{k_j}\le\mu_{j-1},\\
&Q_j=K_{2r_j}(x_o)\times[t_o-c\mu_{j}^{2-p}2(2r_j)^p,t_o+c\mu_{j}^{2-p}(2r_j)^p],
\end{aligned}
\end{equation*}
where $\mu_j\le\mu_{j-1}$ by the induction hypothesis.

Now, we will show how $\dsty\mu_{j+1}\df=\sup_{\tilde Q_{j+1}} u_k$, satisfies
\[
\mu_{j+1}\le\mu_j,
\]
and we will also give a quantitative upper bound on $\mu_{j+1}$ in terms of $\mu_j$.
For simplicity, in the sequel, unless otherwise stated, we drop the suffix $j$, and simply write $r$, $k$, $\mu$, 
$\tilde Q$, and $Q$.

By construction, $Q\subseteq \tilde Q$. Let $v:Q\to\rr_+$ be defined by $\dsty v=\mu-u_k$. It is apparent that
\[
\sup_Q v\le\mu,\qquad\inf_Q v\ge0.
\]
Since $2^{p+1}>3$, we have
\[
t_o-c\,\mu^{2-p}3r^p>t_o-c\,\mu^{2-p}2(2r)^p.
\]
Moreover, for any $\dsty t\ge t_o-c\,\mu^{2-p}3r^p$, it is not difficult to verify that 
\begin{equation}\label{Eq:6:3}
t-c\left[\dashint_{K_{2r}(x_o)}v(x,t)\,dx\right]^{2-p}r^p\ge t_o-c\,\mu^{2-p}2(2r)^p.
\end{equation}
Consider the closed and bounded interval
\[
I\df=[t_o-c\,\mu^{2-p}3r^p,t_o+c\,\mu^{2-p}r^p]\subset[t_o-c\,\mu^{2-p}2(2r)^p,t_o+c\,\mu^{2-p}(2r)^p],
\]
and the function $f:I\to\rr$ defined by
\[
f(t)=t+c\left[\dashint_{K_{2r}(x_o)}v(x,t)\,dx\right]^{2-p}r^p.
\]
It is straightforward to check that $f\in C^o(I)$. Therefore, $f$ attains all the values of the interval $[\min_I f,\max_I f]$. Notice that
\begin{equation}\label{Eq:6:4}
\begin{aligned}
&\min_I f\le t_o-2c\,\mu^{2-p}r^p,\\
&\max_I f\ge t_o+c\,\mu^{2-p}r^p,\\
&\max_I f\le t_o+c\,\mu^{2-p}2r^p.
\end{aligned}
\end{equation}
Hence $J\df=[t_o-2c\,\mu^{2-p}r^p,t_o+c\,\mu^{2-p}r^p]\subset[\min_I f,\max_I f]$, and we can conclude that $f$ 
attains all values of $J$.

Moreover, by \eqref{Eq:6:3}, we can apply Lemma~\ref{Lm:5:1} and conclude that
\begin{align}
&\mu[\dl(r)]^{\frac1{p-1}}\le\gm_1\dashint_{K_{2r}(x_o)}v(x,\tau)\,dx,\ \ \ \forall\,\tau\in[t_o-c\,\mu^{2-p}3r^p,t_o+c\,\mu^{2-p}r^p],\nonumber\\
&\mu[\dl(r)]^{\frac1{p-1}}\le\gm_2\inf_{K_{2r}(x_o)} v(x,\tau),\ \ \ \forall\,\tau\in[t_o-c\,\mu^{2-p}2r^p,t_o+c\,\mu^{2-p}r^p].\label{Eq:6:6}
\end{align}
If we revert to using the suffix $j$, and we recall that by \eqref{Eq:ind} $r_{j+1}=\frac{r_j}2$, then \eqref{Eq:6:6} yields
\begin{equation}\label{Eq:6:1}
\mu_j[\dl(r_j)]^{\frac1{p-1}}\le\gm_2[\mu_j-\sup_{\tilde Q_{j+1}} u_{k_j}] 
\end{equation}
where
\[
\tilde Q_{j+1}=K_{2r_{j+1}}(x_o)\times[t_o-c\,\mu_j^{2-p}2(2r_{j+1})^p,t_o+c\,\mu_j^{2-p}(2r_{j+1})^p].
\]
Since
\[
\mu_{j+1}=\sup_{\tilde Q_{j+1}} u_k,
\]
we conclude that
\[
\mu_{j+1}\le\left[1-\frac1{\gm_2}\left[\dl(r_{j})\right]^{\frac1{p-1}}\right]\mu_{j},
\]
exactly as we claimed above.

On the other hand, if
\[
\mu^-_j+\frac{\om_j}{4}<\inf_{\tilde{Q}_j\cap S_T}g,
\]
then the level $\dsty h_j=\mu^-_j+\frac{\om_j}{4}$ is such that
Lemma~\ref{Lem:2:0} applies to $u_{h_j}=(h_j-u)_+$ extended as zero
outside $\tilde Q_j\cap E_T$. Therefore Proposition~\ref{Prop:A:1:1},
Lemmas~\ref{Lm:A:1:2} and \ref{Lm:5:1} can be used. Finally, let
\begin{equation*}
\begin{aligned}
&\mu_j=\sup_{\tilde Q_j} u_{h_j}\le\mu_{j-1},\\
&Q_j=K_{2r_j}(x_o)\times[t_o-c\mu_{j}^{2-p}2(2r_j)^p,t_o+c\mu_{j}^{2-p}(2r_j)^p],
\end{aligned}
\end{equation*}
where again $\mu_j\le\mu_{j-1}$ by the induction hypothesis. Working as before, we conclude that 
\begin{equation}\label{Eq:6:1:bis}
\mu_j[\dl(r_j)]^{\frac1{p-1}}\le\gm_2[\mu_j-\sup_{\tilde Q_{j+1}} u_{h_j}],
\end{equation}
which yields 
\[
\mu_{j+1}\le\left[1-\frac1{\gm_2}\left[\dl(r_{j})\right]^{\frac1{p-1}}\right]\mu_{j}.
\]
Notice that, if
\[
\mu^+_j-\frac{\om_j}{4}>\sup_{\tilde{Q}_j\cap S_T}g,
\]
then by construction 
$$\mu_{j}=\sup_{\tilde{Q}_{j}} (u-{k_{j}})_+=\frac{\om_{j}}{4},$$
whereas if
\[
\mu^-_j+\frac{\om_j}{4}<\inf_{\tilde{Q}_j\cap S_T}g,
\]
then by construction 
$$\mu_{j}=\sup_{\tilde{Q}_{j}} (h_j-u)_+=\frac{\om_{j}}{4}.$$
Therefore, as a matter of fact, the nested cylinders we defined are 
\begin{equation*}
\begin{aligned}
&\tilde{Q}_j=K_{2r_j}(x_o)\times[t_o-\frac c4\om_{j-1}^{2-p}2(2r_j)^p, t_o+\frac c4\om_{j-1}^{2-p}(2r_j)^p],\\
&\om_{j}\le\om_{j-1},\\
&Q_j=K_{2r_j}(x_o)\times[t_o-\frac c4\om_{j}^{2-p}2(2r_j)^p, t_o+\frac c4\om_{j}^{2-p}(2r_j)^p],
\end{aligned}
\end{equation*}
and the induction process shows that
\[
\om_{j+1}=\osc_{\tilde{Q}_{j+1}\cap E_T} u\le\om_j.
\]
The quantitative upper bound given in \eqref{Eq:6:1} is actually an upper bound on $\om_{j+1}$, and reads
\begin{equation*}
\begin{aligned}
(\mu^{+}_j-k_j)[\dl(r_j)]^{\frac1{p-1}}&\le\gm_2[\mu^{+}_j-k_j-(\mu^{+}_{j+1}-k_j)]\\
&\le\gm_2(\mu^{+}_j-\mu^{+}_{j+1})\\
&\le\gm_2(\om_j-\osc_{\tilde{Q}_{j+1}\cap E_T}u);
\end{aligned}
\end{equation*}
an analogous result holds for \eqref{Eq:6:1:bis}. Hence, we get 
\begin{equation*}
\osc_{\tilde{Q}_{j+1}\cap E_T}u\le\left[1-\frac1{4\gm_2}\left[\dl(r_{j})\right]^{\frac1{p-1}}\right]\om_{j}.
\end{equation*}
%%%%%%%%%%%%%%%%%%%
\subsection{The Proof Concluded} 
If we set 
\[
A(r_j)=\frac1{4\gm_2}\left[\dl(r_{j})\right]^{\frac1{p-1}},
\]
and redefine
\begin{equation}\label{iteration}
\om_{j+1}=\max\left\{\left[1-A(r_j)\right]\om_{j},\,2\osc_{\tilde{Q}_j\cap S_T}g\right\},
\end{equation}
we can conclude that
\[
\osc_{\tilde{Q}_{j+1}\cap E_T}u\le\om_{j+1},
\]
and an iteration of the above inequality yields that for a positive integer $m$ 
\begin{align*}
\om_m&\le\om_o\prod_{j=0}^{m-1}[1-A(r_{j})]+2\osc_{\tilde{Q}_o\cap S_T}g,\\
&\le\om_o\prod_{j=0}^{m-1}\exp[-A(r_{j})]+2\osc_{\tilde{Q}_o\cap S_T}g\\
&=\om_o\exp[-\sum_{j=0}^{m-1}A(r_{j})]+2\osc_{\tilde{Q}_o\cap S_T}g.
\end{align*}
For any $y>0$, if $s\in(y,2y)$, then it is easy to check that $\dsty A(s)\le 2^{\frac{N-p}{p-1}} A(2y)$, and therefore,
\begin{align*}
\int_{y}^{2y}A(s)\frac{ds}s&\le 2^{\frac{N-p}{p-1}} A(2y)\\
\int_{2^{-m}r}^{r}A(s)\frac{ds}s&=\sum_{j=0}^{m-1}\int_{2^{-(j+1)}r}^{2^{-j}r}A(s)\frac{ds}s
\le2^{\frac{N-p}{p-1}}\sum_{j=0}^{m-1}A(2^{-j}r)\\
\om_m &\le\om_o\exp\left(-\frac1{2^{\frac{N-p}{p-1}}}\int_{2^{-m}r}^{r}A(s)\frac{ds}s\right)
+2\osc_{\tilde{Q}_o\cap S_T}g.
\end{align*}
Now fix $\rho\in(0,2r_1)$: since the sequence $\{\frac c4\om^{2-p}_{j-1} (2r_j)^p\}$ decreases to $0$ and partitions
the interval $(0,\frac c4\om^{2-p}_o (2r_1)^p)$, there must exist a positive integer $l$ such that
\[
\frac c4\om^{2-p}_{l} (2r_{l+1})^p<\frac c4\om_o^{2-p}\rho^p\le \frac c4\om^{2-p}_{l-1} (2r_l)^p.
\]
Note that from \eqref{iteration} we have
\[
\om_{l}\ge \lm^l\om_o\quad\text{with }\lm=1-\frac1{4\gm_2};
\]
then it is straightforward to verify
\[
Q_{\rho}(\om_o)=K_\rho(x_o)\times[-\frac c4\om_o^{2-p}2\rho^p, \frac c4\om_o^{2-p}\rho^p]\subset \tilde{Q}_l
\]
and
\[
l\ge\frac{p\ln\big(\frac{\rho}r\big)}{(2-p)\ln\lm-p\ln 2}.
\]
Thus
\[
\osc_{Q_{\rho}(\om_o)\cap E_T}u\le\om_o\exp\left\{
-\frac{1}{\gm}\int_{r(\frac{\rho}r)^{\al}}^{r}A(s)\frac{ds}s\right\}+2\osc_{\tilde{Q}_o\cap S_T}g,
\]
where
\[
\al=\frac{p}{(p-2)\frac{\ln\lm}{\ln2}+p},\qquad\gm=2^{\frac{N-p}{p-1}}.
\]
Define the function
\[
(0,1)\ni\rho\to\bar{\om}(\rho)=\exp\left\{-\frac1{\gm}\int_{\rho}^1 A(s)\frac{ds}{s}\right\}.
\]
In the above oscillation estimate choose
\[
r=[\bar{\om}(\rho^\al)]^{\nu}\quad\text{for an arbitrary }0<\nu<1;
\]
we obtain 
\[
\osc_{Q_{\rho}(\om_o)\cap E_T}u\le\om_o\exp\left\{
-\frac{1}{\gm}\int_{\rho^{\al}[\bar{\om}(\rho^\al)]^{\nu(1-\al)}}^{[\bar{\om}(\rho^\al)]^{\nu}}A(s)\frac{ds}s\right\}
+2\osc_{\tilde{Q}_o(\rho)\cap S_T}g.
\]
where 
\begin{equation}\label{large:cyl}
\tilde{Q}_o(\rho)\df=
\left\{
\begin{aligned}
&K_{2r}(x_o)\times[t_o-c\om_o^{2-p}2(2r)^p, t_o+c\om_o^{2-p}(2r)^p]\\ 
&\text{with}\ r=[\bar{\om}(\rho^\al)]^{\nu}.
\end{aligned}
\right.
\end{equation}
Let us now focus on the first term of the right-hand side.
One estimates
\begin{align*}
\int_{\rho^{\al}[\bar{\om}(\rho^\al)]^{\nu(1-\al)}}^{[\bar{\om}(\rho^\al)]^{\nu}}A(s)\frac{ds}s
&=\int_{\rho^{\al}[\bar{\om}(\rho^\al)]^{\nu(1-\al)}}^1A(s)\frac{ds}s
-\int^1_{[\bar{\om}(\rho^\al)]^{\nu}}A(s)\frac{ds}s\\
&\ge -\ln[\bar{\om}(\rho^\al)]+\ln[\bar{\om}(\rho^\al)]^{\nu}\\
&\ge\ln[\bar{\om}(\rho^\al)]^{\nu-1}.
\end{align*}
Therefore, we obtain that 
\begin{align*}
\exp\left\{
-\frac{1}{\gm}\int_{\rho^{\al}[\bar{\om}(\rho^\al)]^{\nu(1-\al)}}^{[\bar{\om}(\rho^\al)]^{\nu}}A(s)\frac{ds}s\right\}
&\le\exp\left\{-\frac1{\gm}\ln[\bar{\om}(\rho^\al)]^{\nu-1}\right\}\\
&=[\bar{\om}(\rho^\al)]^{\frac{1-\nu}{\gm}}.
\end{align*}
Consequently, we conclude that
\[
\osc_{Q_{\rho}(\om_o)\cap E_T}u\le
\om_o\exp\left\{-\frac{1-\nu}{\gm^2}\int_{\rho^\al}^1 A(s)\frac{ds}{s}\right\}
+2\osc_{\tilde{Q}_o(\rho)\cap S_T}g.
\]

\bye